\newtheorem{thm}{Theorem}[section]
\newtheorem{cor}[thm]{Corollary}
\newtheorem{lem}[thm]{Lemma}
\newtheorem{prop}[thm]{Proposition}
\theoremstyle{definition}
\newtheorem{defin}[thm]{Definition}
\newtheorem{rem}[thm]{Remark}
\newtheorem{exa}[thm]{Example}
\numberwithin{equation}{section}
\begin{document}


\baselineskip=17pt


\title{Topological transitivity and wandering intervals for group actions on the line $\mathbb R$}

\author{Enhui Shi\\
School of Mathematical Sciences\\
Soochow University\\
Suzhou 215006, P. R. China\\
E-mail:  ehshi@suda.edu.cn \\
\\
Lizhen Zhou\\
School of Mathematical Sciences\\
Soochow University\\
Suzhou 215006, P. R. China\\
E-mail: zhoulizhen@suda.edu.cn}

\date{}

\maketitle


\renewcommand{\thefootnote}{}

\footnote{2010 \emph{Mathematics Subject Classification}: Primary
37B05; Secondary 37E99.}

\footnote{\emph{Key words and phrases}: topological transitivity; orderable group; indicable group.}

\renewcommand{\thefootnote}{\arabic{footnote}}
\setcounter{footnote}{0}


\begin{abstract}
For every group $G$, we show that either $G$ has a topologically transitive action on the line $\mathbb R$ by orientation-preserving homeomorphisms,
or every orientation-preserving action of $G$ on $\mathbb R$ has a wandering interval. According to this result, all groups are divided into two types:
transitive type and wandering type, and the types of several groups are determined. We also show that every finitely generated orderable group of wandering type is indicable.
As a corollary, we show that if a higher rank lattice $\Gamma$ is orderable, then $\Gamma$ is of transitive type. 
\end{abstract}

\section{Preliminaries}

 Let $X$ be a topological space and let ${\rm Homeo}(X)$ be
the homeomorphism group of $X$. Suppose $G$ is a group. A group
homomorphism $\phi: G\rightarrow {\rm Homeo}(X)$ is called an {\it
action} of $G$ on $X$; the action is said to be {\it faithful} if
$\phi$ is injective. If $G$ is a subgroup of ${\rm Homeo}(X)$, then
the action of $G$ on $X$ always refers to the inclusion homomorphism $\iota: G\hookrightarrow {\rm Homeo}(X)$.
For brevity, we usually use $gx$ or $g(x)$
instead of $\phi(g)(x)$ for $g\in G$ and $x\in X$. The {\it orbit}
of $x\in X$ under the action of $G$ is the set $Gx\equiv\{gx:g\in
G\}$;  $x$ is called a {\it fixed point} of $\phi$ or of $G$, if $gx=x$ for all $g\in G$.
We use ${\rm Fix}(G)$ to denote the fixed point set of $G$; use ${\rm Fix}(g)$ to denote
the fixed point set of the cyclic group $\langle g\rangle$ generated by $g\in G$. A subset $Y$ of $X$ is called {\it
$G$-invariant}, if $g(Y)\subset Y$ for all $g\in G$.

Let $\phi$ be an action of group $G$ on a topological space $X$. The action $\phi$ (or $G$) is said to be
{\it topologically transitive}, if for every nonempty open subsets $U$ and $V$ of $X$, there
is some $g\in G$ such that $g(U)\cap V\not=\emptyset$. It is well known that, when $G$ is countable and $X$ is a Polish space, $G$ is
topologically transitive if and only if there is a point $x\in X$ such that the orbit $Gx$ is dense
in $X$. Furthermore, $G$ is said to be {\it minimal} if for every $x\in X$ the orbit $Gx$ is dense in $X$;
this is equivalent to saying that there is no proper $G$-invariant nonempty closed subset of $X$.
A closed subset $Y$ of $X$ is said to be {\it minimal}, if $Y$ is $G$-invariant and the restriction action $G|_Y$ of $G$ to $Y$
is minimal. An argument using Zorn's lemma shows the existence of minimal sets when $X$ is a compact metric space,
but this is not true in general when $X$ is not compact.

Let $\mathbb R$ be the real line and let ${\rm Homeo}_+(\mathbb R)$ be the orientation-preserving
homeomorphism group of $\mathbb R$. A group homomorphism $\phi$ from $G$ to ${\rm Homeo}_+(\mathbb R)$
is called an {\it orientation-preserving action} of $G$ on $\mathbb R$. An open interval $(a,b)\subset \mathbb R$ is said to be
a {\it wandering interval} of $\phi$ or of $G$ if, for every $g\in G$, either the restriction $g|_{(a,b)}={\rm Id}_{(a,b)}$
or $g((a, b))\cap (a, b)=\emptyset$.

For any $\alpha\in\mathbb R$, define $L_\alpha, M_\alpha:\mathbb R\rightarrow \mathbb R$ by letting $L_\alpha(x)=x+\alpha$
and $M_\alpha(x)=\alpha x$ for every $x\in \mathbb R$. We use $\mathbb Z^n$ to denote the free abelian group of rank $n$. Now we give some examples to
illustrate the above notions, which will be used in section $4$.

\begin{exa}
Every open interval $(a, b)$ with $b-a<1$ is a wandering interval for the $\mathbb Z$ action generated by $L_1$.
\end{exa}

\begin{exa}
Let $\alpha$ be an irrational number, then the $\mathbb Z^2$ action generated by $L_1$ and $L_\alpha$ on $\mathbb R$
is minimal.
\end{exa}

\begin{exa}
Let $n$ be a positive integer. Let $T=L_1$ and let $S=M_n$. Then
$S^{-m}TS^m(x)=x+n^{-m}$ for all non negative integers $m$ and all $x\in \mathbb R$, which clearly implies the minimality of the
action of the group $G$ generated by $S$ and $T$.
\end{exa}

\begin{exa}
Let $f=L_1$ and let $k\geq 2$ be a positive integer. Define a homeomorphism $g$ on $\mathbb R$ by setting
$$g(x)=(x-n)^{2^{(-1)^nk^{-n}}}+n$$ for all integers $n$ and all $x\in [n, n+1)$. Then, for $x\in [n, n+1)$, we have
$$\begin{array}{rl}
    fgf^{-1}(x)&=(x-1-(n-1))^{2^{(-1)^{n-1}k^{-(n-1)}}}+(n-1)+1\\
               &=(x-n)^{2^{(-1)^{n-1}k^{-(n-1)}}}+n,
\end{array}
$$
and
$$\begin{array}{rl}
    g^{-k}(x)&=(x-n)^{2^{(-1)^{n+1}k^{-n}k}}+n=(x-n)^{2^{(-1)^{n-1}k^{-(n-1)}}}+n.
\end{array}
$$
So $fgf^{-1}=g^{-k}$. Since $f^mg^lf^n(\frac{1}{2})=(\frac{1}{2})^{2^{(-1)^nk^{-n}l}}+n+m$ for all integers $m, l, n$, the set
$\{f^mg^lf^n(\frac{1}{2}):m,l,n\in\mathbb Z\}$ is dense in $\mathbb R$. This implies that the action by the group $\langle f, g\rangle$ generated by
$f$ and $g$ is topologically transitive.
\end{exa}

\section{Background and main results}

The dynamical system for group actions on one-manifolds has been intensively studied. One may consult \cite{Gh, Na1} for a
systematic introduction to this area. Group actions on the real line $\mathbb R$ are closely related to the study of orderability of a group.
In fact, a countable group $G$ is orderable (that is, $G$ admits a left-invariant total order relation) if and only if
it admits a faithful orientation-preserving action on the real line (see {\cite[Prop. 2.1]{Na2}}). Many important groups coming from
geometry and topology are known to be orderable or nonorderable (see e.g. \cite{BRW, DDRW, SW, Wi2}). In addition, an orderable group
may possess some interesting algebraic properties (see e.g. \cite{Na0, Rh,  Wi1}).

The purpose of the paper is to classify group actions on $\mathbb R$ by means of topological transitivity. One may consult \cite{SZ} for
some related investigations. We first give the following dichotomy theorem.

\begin{thm}
Let $G$ be a group. Then either $G$ has a topologically transitive action on the line $\mathbb R$ by orientation-preserving homeomorphisms,
or every orientation-preserving action of $G$ on $\mathbb R$ has a wandering interval.
\end{thm}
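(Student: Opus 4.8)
The plan is to prove the nontrivial direction in contrapositive form: assuming $G$ admits \emph{some} orientation-preserving action $\phi$ on $\mathbb R$ with no wandering interval, I will construct a topologically transitive orientation-preserving action of $G$. (The two alternatives are in fact mutually exclusive, which confirms that it suffices to produce one transitive action: if $\phi$ is transitive and $(a,b)$ were wandering, pick disjoint nonempty open $U,V\subset(a,b)$ and $g$ with $g(U)\cap V\neq\emptyset$; then $g((a,b))\cap(a,b)\neq\emptyset$ forces $g|_{(a,b)}=\mathrm{Id}$, so $g(U)=U$ is disjoint from $V$, a contradiction.)

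First I would reduce to the case $\mathrm{Fix}(G)=\emptyset$. Since $\phi$ has no wandering interval it is nontrivial, so $\mathrm{Fix}(G)\neq\mathbb R$; let $J$ be a connected component of $\mathbb R\setminus\mathrm{Fix}(G)$. As the endpoints of $J$ are fixed, $J$ is $G$-invariant, and every $g\in G$ acts on $J$ exactly as $g|_J$, so a wandering interval for the restricted action $G|_J$ on $J\cong\mathbb R$ would be wandering for $\phi$; hence $G|_J$ has no wandering interval and, by the choice of $J$, no global fixed point. Replacing $\phi$ by $G|_J$, I may assume $\mathrm{Fix}(G)=\emptyset$. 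The value of this reduction is the following rigidity: if some orbit $Gp$ were bounded above then, each $g$ being increasing, $g(\sup Gp)=\sup g(Gp)=\sup Gp$, so $\sup Gp\in\mathrm{Fix}(G)$, a contradiction. Thus every orbit is unbounded in both directions.

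The engine of the construction is a monotone collapse. Fix $p$ and set $K=\overline{Gp}$, a closed $G$-invariant set unbounded in both directions; if $K=\mathbb R$ then $\phi$ has a dense orbit and is already transitive, so assume $K\neq\mathbb R$. The complementary gaps of $K$ are bounded open intervals permuted by $G$. I would build a continuous nondecreasing surjection $h:\mathbb R\to\mathbb R$ that is constant exactly on the closures of these gaps and strictly increasing across $K$ (for instance $h(x)=\nu((0,x])$ for an atomless Borel measure $\nu$ with $\mathrm{supp}\,\nu=K$ and infinite mass on each half-line). Since $h(x)=h(y)$ holds exactly when $(x,y)$ lies in a single gap, and $g$ sends gaps to gaps, each $g$ descends through $h$ to an increasing homeomorphism $\bar g$ of the target line; this gives a homomorphism $\bar\phi:G\to\mathrm{Homeo}_+(\mathbb R)$. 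As $h$ is continuous with $h(K)=\mathbb R$, we get $\overline{h(Gp)}\supseteq h(\overline{Gp})=\mathbb R$, so $h(Gp)$ is dense and $\bar\phi$ is topologically transitive.

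The hard part is guaranteeing that the collapse lands on a \emph{connected} target, i.e. that the quotient is a line. This fails precisely when $K$ has isolated points: collapsing the closures of two gaps sharing an isolated endpoint merges that point with both neighbours, and for a discrete orbit closure (the $\mathbb Z$-translation picture of Example 1.1) the whole line degenerates to a point. Equivalently, I need $K$ to be perfect, so that an atomless measure with $\mathrm{supp}\,\nu=K$ exists. The crux lemma is therefore that \emph{under the standing hypotheses some orbit closure is non-scattered}, i.e. has nonempty Cantor--Bendixson kernel $K^\infty$; one then collapses along $K^\infty$, which is closed, $G$-invariant, perfect, and (being invariant with $\mathrm{Fix}(G)=\emptyset$) unbounded in both directions. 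To prove the lemma I expect to argue by contradiction: if every orbit closure is scattered, then $G$ acts on the discretely ordered set of gaps of such a set by order automorphisms, and the resulting homomorphism to that automorphism group exhibits a gap whose setwise stabilizer acts trivially on it, producing a wandering interval. Making this extraction uniform, in particular controlling the possible descent into successive complementary intervals on which the stabilizer acts nontrivially, is the main technical obstacle, and is exactly where the absence of wandering intervals must be used in full strength.
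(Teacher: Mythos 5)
Your reductions and your ``engine'' are sound: the mutual-exclusivity remark, the restriction to a component of $\mathbb{R}\setminus\mathrm{Fix}(G)$, and the monotone collapse along a closed, perfect, $G$-invariant, doubly unbounded set are all correct. In fact the collapse is essentially the paper's Lemma 3.1, stated there contrapositively: if $G$ admits no transitive action, then every orbit closure is countable (for closed subsets of $\mathbb{R}$, countable and scattered coincide by Cantor--Bendixson), hence nowhere dense. But your proof is not complete: the ``crux lemma'' --- that an action with no wandering interval must have some non-scattered (uncountable) orbit closure --- is precisely the heart of the theorem, and you do not prove it. You only sketch a strategy and explicitly flag its key step (making the extraction uniform, controlling the descent into gap stabilizers) as an unresolved obstacle. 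As written, the hypothesis ``no wandering interval'' is never actually used in a completed argument, so what you have established is only the easy collapse direction.

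For comparison, here is how the paper closes exactly this gap, and it does so without any descent into stabilizers. Assume (a) no transitive action and (b) an action with no wandering interval. By Lemma 3.1, for any finite set $A$ the set $B=\overline{GA}=\bigcup_{x\in A}\overline{Gx}$ is a countable, nowhere dense, $G$-invariant closed set, so $G$ permutes the maximal open intervals of $\mathbb{R}\setminus B$. The paper runs an induction on such gaps: given a maximal gap $U_k$ of $B_k$, hypothesis (b) together with the permutation property yields $g_{k+1}$ with $g_{k+1}(U_k)=U_k$ and $g_{k+1}\neq\mathrm{Id}$ on $U_k$; one picks a small interval $V_{k+1}\subset U_k$ with $\overline{V_{k+1}}\cap g_{k+1}(\overline{V_{k+1}})=\emptyset$ and $\overline{V_{k+1}}\cup g_{k+1}(\overline{V_{k+1}})\subset U_k$, inserts a finite grid $A_{k+1}\subset[0,1]$ of mesh smaller than $\tfrac12\,\mathrm{diam}(V_{k+1})$, and takes $U_{k+1}$ to be a maximal gap of $B_{k+1}=\overline{GA_{k+1}}$ lying inside $V_{k+1}$; the grid forces every $G$-translate of $U_{k+1}$ to meet $[0,1]$ in a set of diameter less than $1/(k+1)$. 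The nested unions $\Lambda_k=\bigcup_{g\in G_k}g(\overline{U_k})$ (with $G_1=\{e,g_1\}$, $G_{k+1}=G_k\cup G_kg_{k+1}$) form a Cantor scheme, and every $x\in\Lambda=\bigcap_k\Lambda_k$ satisfies $\overline{Gx}\supset\Lambda$, an uncountable orbit closure, contradicting Lemma 3.1. Note the device your sketch lacks: instead of analyzing the gap structure of a single orbit closure and recursing into stabilizer actions (where your uniformity problem lives), the paper manufactures ever finer $G$-invariant nowhere dense sets as orbit closures of auxiliary finite grids, so that the non-wandering hypothesis is applied at each stage to a single maximal gap with the whole group still acting. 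To finish your write-up you would either have to solve your uniformity problem directly or adopt this grid construction.
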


We should note that the ``dichotomy phenomenon" in Theorem 2.1 is far from being true for group actions on spaces of dimension $\geq 2$.
For example, if $D$ is the closed unit disk in the plane and $S^2$ is the unit sphere in $\mathbb R^3$, then any one point union of $D$ and $S^2$
admits no topologically transitive homeomorphism but admits a homeomorphism with no wandering open set.

Theorem 2.1  motivates us to give the following definition.

\begin{defin}
A group $G$ is of {\it transitive type} if it has a topologically transitive action on the line $\mathbb R$ by
orientation-preserving homeomorphisms; it is of {\it wandering type} if every orientation-preserving action of $G$ on $\mathbb R$
has a wandering interval.
\end{defin}

Recall that a group $G$ is {\it poly-cyclic} (resp. {\it super-poly-cyclic}) if it admits a decreasing sequence of subgroups
$G=N_0\rhd N_1\rhd...\rhd N_k=\{e\}$ for some positive integer $k$ such that $N_{i+1}$ is normal in $N_i$ (resp. $N_{i+1}$ is normal in $G$) and $N_i/N_{i+1}$ is cyclic for each $i\leq k-1$; it is called {\it poly-infinite-cyclic} (resp. {\it super-poly-infinite-cyclic}) if $N_i/N_{i+1}$ is infinitely cyclic for each $i\leq k-1$. It is well known that all poly-cyclic groups are solvable and all finitely generated torsion free nilpotent groups are super-poly-infinite-cyclic.

Suppose $G=N_0\rhd N_1\rhd...\rhd N_k=\{e\}$ is super-poly-infinite-cyclic. Take $f_i\in N_i\setminus N_{i+1}$ such that $N_i/N_{i+1}=\langle f_iN_{i+1}\rangle$ for each $i$. Then $f_if_{i+1}N_{i+2}f_i^{-1}=f_{i+1}^{n_i}N_{i+2}$ where $n_i=\pm 1$. We call the $(k-1)$-tuple $(n_0, n_1, ..., n_{k-2})$ {\it the name of $G$}. Clearly, the name of $G$ is independent of the choice of $f_i$.

For each integer $n$, the {\it Baumslag-Solitar group} $B(1, n)$ is the group $\langle a, b:  ba=a^nb \rangle$; $B(1, -1)$ is the fundamental group of the  Klein Bottle, which is a classical example
being of orderable but not bi-orderable (see \cite[Exercise 2.2.68]{Na1}).

\begin{thm}
The following groups are of transitive type: the nonabelian free group $\mathbb Z*\mathbb Z$; any super-poly-infinite-cyclic group $G=N_0\rhd N_1\rhd...\rhd N_k=\{e\}$ having the name $(n_0, n_1, ..., n_{k-2})$ with some $n_i=1$; any poly-infinite-cyclic, non super-poly-infinite-cyclic group $G$; the Baumslag-Solitar group $B(1, n)$ with $n\not=0$ and $n\not=-1$.

The following groups are of wandering type: finite groups;  the infinite cyclic group $\mathbb Z$; $SL(2, \mathbb Z)$; finite index subgroups of $SL(n,\mathbb Z)$ with $n\geq 3$; any super-poly-infinite-cyclic group $G=N_0\rhd N_1\rhd...\rhd N_k=\{e\}$ having the name $(-1, -1, ..., -1)$; the Baumslag-Solitar group $B(1, -1)$.
\end{thm}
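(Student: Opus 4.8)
The plan is to use the dichotomy of Theorem 2.1 throughout: since no action can simultaneously possess a wandering interval and send the two halves of that interval across each other, a group is of wandering type precisely when it admits \emph{no} topologically transitive orientation-preserving action on $\mathbb R$. I would first record one elementary but decisive reduction: if $G$ surjects onto a group $Q$ of transitive type, then pulling a transitive $Q$-action back along $G\twoheadrightarrow Q$ gives a transitive $G$-action, so transitive type is inherited from quotients; I would use this to feed the examples of Section 1 into the poly-cyclic cases. Two further facts I would isolate at the outset: every finite-order element of $\mathrm{Homeo}_+(\mathbb R)$ is the identity (a nontrivial increasing map has a strictly monotone orbit, hence infinite order), and a single nontrivial $f\in\mathrm{Homeo}_+(\mathbb R)$ always has a wandering interval, namely $(x_0,f(x_0))$ for $x_0$ in a component of $\mathbb R\setminus\mathrm{Fix}(f)$, since the iterates $(f^n(x_0),f^{n+1}(x_0))$ tile that component.

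For the transitive-type list the Baumslag--Solitar cases are immediate from Section 1: Example 1.2 realizes $B(1,1)=\mathbb Z^2$ minimally, Example 1.3 realizes $B(1,n)$ minimally for $n\ge 2$, and Example 1.4 realizes $B(1,n)$ transitively for $n\le -2$ (with $k=-n$); minimal or transitive actions admit no wandering interval. For $\mathbb Z*\mathbb Z$ I would produce a \emph{minimal} action: start from the dynamical realization on $\mathbb R$ of a left-order on $F_2$ whose unique minimal set is a Cantor set $K$ (free groups are orderable, and a suitable order yields a Cantor minimal set, which can be taken unbounded), then collapse each bounded complementary interval of $K$ to a point; the quotient is again homeomorphic to $\mathbb R$ and the induced $F_2$-action is minimal. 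For the poly-infinite-cyclic groups I would induct on the length $k$, the base case $k=2$ being exactly $B(1,\pm1)$. In the step I would pass to $G/N_{k-1}$, whose name is the truncation $(n_0,\dots,n_{k-3})$: if this contains an entry $+1$ then $G/N_{k-1}$ is of transitive type by induction and the quotient reduction finishes $G$; the residual subcase is that the single $+1$ sits in the last slot $n_{k-2}=1$, where $N_{k-2}=\langle f_{k-2},f_{k-1}\rangle\cong\mathbb Z^2$ is normal and I would build the action by extending the minimal $\mathbb Z^2$-action of Example 1.2 across the orientation-preserving (hence commuting, $+1$) identifications supplied by the generators above it. The non-super case I would treat similarly, showing that a level at which the conjugation cannot be consistently oriented forces a commuting direction in some quotient, again reducing to a transitive-type quotient.

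For the wandering-type list, $SL(2,\mathbb Z)$ and the finite groups are handled by the torsion remark: $SL(2,\mathbb Z)$ is generated by two torsion elements (of orders $4$ and $6$), so any orientation-preserving action on $\mathbb R$ is trivial and every interval is wandering, and the same triviality holds for finite groups. For finite-index subgroups of $SL(n,\mathbb Z)$ with $n\ge 3$ I would invoke higher-rank rigidity (Witte Morris; see e.g.\ \cite{Na1}): every such action on $\mathbb R$ has finite, hence trivial, image, so again every interval is wandering. The infinite cyclic group $\mathbb Z$ is the single-generator case already disposed of above.

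The main obstacle is the last family: the super-poly-infinite-cyclic groups of name $(-1,\dots,-1)$, with base case the Klein bottle group $B(1,-1)=\langle a,b:bab^{-1}=a^{-1}\rangle$, where I must rule out every transitive action. Writing $\alpha=\phi(a)$, $\beta=\phi(b)$ with $\beta\alpha\beta^{-1}=\alpha^{-1}$, I would note that $\mathrm{Fix}(\alpha)$ is $\langle a,b\rangle$-invariant and that, since a fixed-point-free increasing map cannot be conjugate to its own decreasing inverse, $\alpha$ must have a fixed point whenever $\alpha\ne\mathrm{Id}$; the case $\alpha=\mathrm{Id}$ drops to a $\mathbb Z$-action, which is never transitive on $\mathbb R$. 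For $\alpha\ne\mathrm{Id}$ I would use amenability: $B(1,-1)$ is solvable, so the action preserves a nonzero Radon measure $\mu$ (see \cite{Na1}), and on $\mathrm{supp}(\mu)$ the map $x\mapsto\mu([x_0,x])$ semiconjugates the action to one by translations, hence factoring through a homomorphism $G\to(\mathbb R,+)$; because $a$ is $2$-torsion in $G^{ab}=\mathbb Z\oplus\mathbb Z/2$, this homomorphism kills $a$, collapsing the picture to a $\mathbb Z$-action with no dense orbit. The delicate point I expect to fight with is upgrading ``no minimal action'' to ``no transitive action'' on the non-compact line, that is, controlling the support and atoms of $\mu$ for a merely transitive action, and then propagating the obstruction up the series by induction on $k$, each level contributing a Klein-bottle relation whose $-1$ again blocks the orientation-preserving realization required for a dense orbit. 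Making this measure step and the induction uniform is, I expect, the technical crux.
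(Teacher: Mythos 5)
Your transitive-type half essentially reproduces the paper's argument: quotient reduction plus Examples 1.2--1.4, with the poly-infinite-cyclic cases handled by extending a transitive action of a normal subgroup across a cyclic quotient. Two corrections there. The extension step is the paper's Lemma 4.2, and it is the real workhorse: given $H\lhd G$ with $G/H=\langle aH\rangle$ infinite cyclic and $H$ of transitive type, one compresses the transitive $H$-action into $(0,1)$, lets $a$ act by $x\mapsto x+1$, and defines $\phi(b)$ on $[j,j+1]$ through the conjugate $a^{-j}ba^{j}$; crucially this needs \emph{no} hypothesis on how $a$ conjugates $H$, so your parenthetical ``hence commuting, $+1$'' is a misconception --- the lemma runs up the whole series regardless of the name, which is exactly what makes the paper's Propositions 4.3 and 4.4 work. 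Relatedly, your sketch for the non-super-poly-cyclic case (``a level at which the conjugation cannot be consistently oriented forces a commuting direction'') is not an argument; the paper instead proves (Proposition 6.4, via characteristic subgroups and the Hirsch number) that a poly-infinite-cyclic group has a \emph{normal} series with finitely generated free abelian quotients, one of which has rank $\geq 2$ in the non-super case, and then applies Lemma 4.2. Your $SL(n,\mathbb Z)$ argument, invoking Witte--Morris's theorem that such actions factor through finite groups, is a legitimate and even shorter alternative to the paper's route through non-orderability, Selberg's lemma and Margulis's normal subgroup theorem; and your Cantor-minimal-set construction for $\mathbb Z*\mathbb Z$ is unnecessary overhead, since the surjection $\mathbb Z*\mathbb Z\twoheadrightarrow\mathbb Z^2$ already gives transitive type by your own quotient reduction (the paper only uses White's theorem to get a \emph{faithful} transitive action, which Theorem 2.3 does not require).

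The genuine gap is precisely where you located the crux: the wandering-type family with name $(-1,\dots,-1)$, including $B(1,-1)$. Your key step --- ``$B(1,-1)$ is solvable, so the action preserves a nonzero Radon measure'' --- rests on a false principle. Solvable groups acting on $\mathbb R$ need not preserve any Radon measure: $B(1,2)$ is solvable, and its affine action $a:x\mapsto x+1$, $b:x\mapsto 2x$ is topologically transitive, while an invariant Radon measure would have to be invariant under all dyadic translations, hence a multiple of Lebesgue measure, which $b$ scales. If your principle were correct, the same argument would show $B(1,2)$ is of wandering type, contradicting the other half of the very theorem you are proving. The hypothesis that actually yields an invariant Radon measure is subexponential growth (Plante's theorem); $B(1,-1)$ qualifies because it is virtually $\mathbb Z^2$, but for the general name-$(-1,\dots,-1)$ groups you would first have to prove virtual nilpotence, and even then the part you defer remains unproved: the kernel of the translation-number homomorphism fixes $\mathrm{supp}(\mu)$ pointwise but can act nontrivially inside the complementary gaps, so transitivity is not yet contradicted --- one must further show that the gap stabilizer (here a single homeomorphism) cannot act transitively on a gap, and then carry out an induction you only gesture at. The paper avoids all of this with an elementary, self-contained induction (Proposition 4.1 and Claims A--D of Proposition 4.5): it builds nested intervals $J_1\supset\dots\supset J_k$ whose endpoints are fixed by the successive subgroups $N_i$, uses the relation $g_ig_{i+1}N_{i+2}=g_{i+1}^{-1}g_iN_{i+2}$ to show that if $f_{l-1}$ preserved $J_l$ then its endpoints would coincide (via $\lim_{m\to+\infty}f_{l-1}f_l^{-m}(w)=\lim_{m\to+\infty}f_l^{m}f_{l-1}(w)$), and concludes that $J_k$ is wandering because any $g=f_0^{n_0}\cdots f_{k-1}^{n_{k-1}}$ returning $J_k$ to itself must have all $n_i=0$. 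As written, your proposal does not prove the hardest part of the statement; you would need either the corrected and completed measure argument or a direct fixed-point-interval argument of this kind.
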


Recall that a group is {\it indicable} if it has a homomorphism onto the infinite cyclic group.
One may consult \cite{Be, LMR, Wi1} for the discussions about indicability of orderable groups.

\begin{thm}
If $G$ is a finitely generated nontrivial orderable group of wandering type, then $G$ is indicable.
\end{thm}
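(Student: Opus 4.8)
The plan is to use orderability to put a left-invariant total order on $G$, to distill from it a \emph{primitive} transitive ordered $G$-set, and then to show that wandering type forces this ordered set to be a copy of $\mathbb Z$, which immediately produces a surjection $G\twoheadrightarrow\mathbb Z$. The single point at which the hypothesis ``wandering type'' enters is through the elementary observation that \emph{a group of wandering type has no topologically transitive orientation-preserving action on $\mathbb R$}: if $I$ were a wandering interval of a transitive action, then applying transitivity to two disjoint nonempty open subintervals $U,V\subset I$ would give $g\in G$ with $g(U)\cap V\neq\emptyset$, while the wandering property forces $g(U)$ to be either a subset of $I$ fixed pointwise (so $g(U)=U$, disjoint from $V$) or disjoint from $I$ altogether, a contradiction.

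First I would set up the algebraic side. Since $G$ is finitely generated it is countable, and being orderable it carries a left-invariant total order $\le$. I would invoke the standard structure theory: the convex subgroups of $(G,\le)$ are totally ordered by inclusion (for positive $a\in A\setminus B$ and positive $b\in B\setminus A$ with $A,B$ convex, comparing $a$ and $b$ and using convexity yields a contradiction), and $\{e\}$ is a proper convex subgroup. Finite generation then yields a \emph{maximal proper} convex subgroup $K\lneq G$: the union of all proper convex subgroups is again convex, and it cannot equal $G$, since otherwise a finite generating set would already be contained in a single member of the chain, forcing that member to be $G$. Passing to left cosets, $G/K$ becomes a linearly ordered set with at least two elements, with no least or greatest element (transitivity with an order-preserving action rules those out), on which $G$ acts transitively by order-preserving bijections; and by maximality of $K$ this action is \emph{primitive}, meaning its only $G$-invariant convex equivalence relations are equality and the full relation, since such relations correspond exactly to convex subgroups lying between $K$ and $G$.

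I would then split according to whether $G/K$ has a \emph{jump}, i.e.\ a pair of adjacent elements. If it does, the relation ``only finitely many elements lie between'' is a $G$-invariant convex equivalence relation strictly coarser than equality, so by primitivity it must be the full relation; a countably infinite linear order without endpoints in which every closed interval is finite is order-isomorphic to $\mathbb Z$, whence $G/K\cong\mathbb Z$. Because the order-automorphism group of $(\mathbb Z,\le)$ consists precisely of the translations and is isomorphic to $\mathbb Z$, the action determines a homomorphism $\rho\colon G\to\mathbb Z$ whose image acts transitively on $\mathbb Z$ and is therefore all of $\mathbb Z$, so $\rho$ is onto and $G$ is indicable. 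If instead $G/K$ has no jump, it is a countable dense linear order without endpoints, hence order-isomorphic to $\mathbb Q$; forming the dynamical realization of the ordered $G$-set $G/K$ produces an orientation-preserving action of $G$ on $\mathbb R$ possessing an orbit order-isomorphic to $G/K$, which may be placed as a dense subset of $\mathbb R$. That orbit is then dense, the action is topologically transitive, and this contradicts wandering type; so the dense case is impossible and $G$ is indicable.

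The step deserving the most care — and the main obstacle — is the dense case: one must verify that the dynamical realization of the primitive dense ordered $G$-set $G/K$ genuinely supplies homeomorphisms of $\mathbb R$, by extending the order-preserving coset bijections of a dense subset to monotone homeomorphisms of the line, and that a single orbit can be made dense, so that topological transitivity truly follows. A secondary technical point is the pair of facts underpinning the reduction, namely that convex subgroups of a merely left-ordered (not necessarily bi-ordered) group form a chain, and that finite generation forces the existence of a maximal proper convex subgroup; both are essential for the primitivity dichotomy to be available.
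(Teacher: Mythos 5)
Your proof is correct, and it takes a genuinely different route from the paper's. The paper argues dynamically: it fixes a faithful fixed-point-free action of $G$ on $\mathbb{R}$ (dynamical realization, its Lemma 5.1), invokes the Malyutin--Navas classification of minimal sets for finitely generated group actions on the line (its Lemma 5.2), and uses its Lemma 3.1 (absence of a transitive action forces every orbit closure to be countable) to exclude the locally-Cantor and $\Lambda=\mathbb{R}$ cases; the minimal set is then a discrete sequence $(a_n)_{n\in\mathbb{Z}}$, and the pointwise stabilizer $H$ of this sequence is normal with $G/H$ infinite cyclic. You instead argue order-theoretically, via the maximal proper convex subgroup $K$ (whose existence uses finite generation, exactly where the paper's appeal to Lemma 5.2 uses it) and the primitive ordered $G$-set $G/K$. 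Your jump case parallels the paper's discrete-minimal-set case --- both read off a surjection onto $\mathbb{Z}$ as the kernel of an action on a copy of $\mathbb{Z}$ --- and your dense case parallels the cases the paper excludes, except that where the paper passes through Lemma 3.1 you directly build a transitive action by embedding $G/K\cong\mathbb{Q}$ densely in $\mathbb{R}$ and extending the coset action by sup-extension. What your approach buys is self-containedness: the minimal-set classification is replaced by the elementary chain/maximality theory of convex subgroups of a left-order together with Cantor's theorem on countable dense orders without endpoints. What it costs is having to verify the realization of the quotient order as an action by orientation-preserving homeomorphisms with a dense orbit --- the step you rightly flag as the delicate one; this is the same extension argument that underlies the paper's Lemma 5.1, so both proofs ultimately lean on a form of dynamical realization, yours applied to $G/K$ rather than to $G$ itself. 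Both proofs use the wandering-type hypothesis in the same single way, namely that it forbids topologically transitive actions on the line.
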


{\it A higher rank lattice} is a lattice of a simple Lie group with finite center and with real rank $\geq 2$.
The 1-dimensional Zimmer's rigidity conjecture says that every continuous action of a higher rank lattice on the
circle $\mathbb S^1$ must factor through a finite group action. Though the conjecture is still open now,
Ghys (see \cite{Ghy}) and Burger-Monod (see \cite{Bu}) proved independently the existence of periodic points for such actions.
This implies that 1-dimensional Zimmer's rigidity conjecture is equivalent to that no higher rank lattice is orderable.
We get immediately the following corollary by Theorem 2.4 and Theorem 6.5 in the appendix.
\begin{cor}
Suppose $G$ is a higher rank lattice. If $G$ is orderable,
then it is of transitive type.
\end{cor}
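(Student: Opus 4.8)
The plan is to combine the two cited results directly: Theorem 2.4 (every finitely generated nontrivial orderable group of wandering type is indicable) and the result I am told to assume, Theorem 6.5 in the appendix, which presumably asserts that a higher rank lattice is not indicable. The corollary should then follow by a short contrapositive argument. So first I would identify the key structural fact about higher rank lattices that lives in Theorem 6.5. Higher rank lattices (lattices in simple Lie groups of finite center and real rank $\geq 2$) are known by Margulis's normal subgroup theorem to have finite abelianization; equivalently, they have property (T), and a group with property (T) has finite abelianization. Either route shows such a lattice cannot surject onto $\mathbb{Z}$, i.e.\ it is \emph{non-indicable}. I expect Theorem 6.5 to record exactly this: no higher rank lattice is indicable (at least, no infinite one is).

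**Next I would run the dichotomy.** Suppose $G$ is a higher rank lattice that is orderable. Being orderable and infinite, $G$ is nontrivial. By Theorem 2.1 (the dichotomy) and Definition 2.2, $G$ is either of transitive type or of wandering type; these are the only two options. The goal is to rule out wandering type. A higher rank lattice is finitely generated (this is a standard consequence of the lattice property, via the Kazhdan property (T) or via Raghunathan's results), so $G$ satisfies the finite-generation hypothesis of Theorem 2.4. If $G$ were of wandering type, then Theorem 2.4 would force $G$ to be indicable. But Theorem 6.5 says $G$ is not indicable---contradiction. Hence $G$ cannot be of wandering type, and therefore $G$ is of transitive type, which is the assertion of the corollary.

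**The main obstacle** is not really in the proof itself, which is a two-line syllogism, but in making sure the hypotheses of Theorem 2.4 are genuinely met---specifically finite generation and nontriviality---and in correctly invoking whatever Theorem 6.5 actually states. I would want to confirm that Theorem 6.5 supplies the non-indicability of higher rank lattices (equivalently, finiteness of the abelianization, e.g.\ as a consequence of property (T) or Margulis's normal subgroup theorem), since that is the nontrivial input that makes the contrapositive bite. Assuming that input, the argument is purely formal: orderable $\Rightarrow$ nontrivial (and finitely generated, as a lattice) $\Rightarrow$ if wandering type then indicable (Theorem 2.4) $\Rightarrow$ contradiction with non-indicability (Theorem 6.5) $\Rightarrow$ transitive type. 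No delicate estimates or constructions are required beyond citing these two theorems and the standard finite-generation of lattices.
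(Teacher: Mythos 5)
Your proposal is correct and takes essentially the same route as the paper, which states the corollary follows ``immediately'' from Theorem 2.4 together with Theorem 6.5: rule out wandering type, since by Theorem 2.4 it would make the (finitely generated, nontrivial, orderable) lattice indicable, contradicting non-indicability of higher rank lattices. The only slight mismatch is that Theorem 6.5 is the Margulis--Kazhdan normal subgroup theorem (every normal subgroup is finite or of finite index) rather than a direct statement of non-indicability, but you correctly identified and supplied exactly that deduction, so the two arguments coincide.
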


\section{The dichotomy theorem}

\begin{lem}
Let $G$ be a group. Suppose $G$ has no topologically transitive action on the line $\mathbb R$ by orientation-preserving homeomorphisms.
Then, for every action $\phi: G\rightarrow {\rm Homeo}_+(\mathbb R)$ and for every $x\in \mathbb R$, $\overline {Gx}$ is countable; in particular,
 $\overline {Gx}$ is nowhere dense.
\end{lem}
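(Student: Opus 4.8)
The plan is to argue by contraposition: assuming that some orbit closure is uncountable, I will manufacture a topologically transitive orientation-preserving action of $G$ on $\mathbb R$, contradicting the hypothesis. So fix an action $\phi\colon G\to \mathrm{Homeo}_+(\mathbb R)$ and a point $x$, set $K=\overline{Gx}$, and suppose $K$ is uncountable. Since $K$ is a closed $G$-invariant set in which the orbit $Gx$ is dense, the restricted action $G|_K$ is already topologically transitive; the whole difficulty is that $K$ is only a closed subset of $\mathbb R$, not $\mathbb R$ itself. Because $\phi$ is orientation-preserving, each $g\in G$ restricts to an order-preserving homeomorphism of $K$ and permutes the complementary open intervals (the gaps) of $K$, carrying endpoints to endpoints.

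The key step is to collapse these gaps. First I invoke the Cantor--Bendixson theorem to write $K=P\sqcup C$ with $P$ perfect and $C$ countable; since $K$ is uncountable, $P$ is nonempty, and because the perfect kernel is a topological invariant it is $G$-invariant, with $Gx$ still dense in $P$. I then build a nondecreasing continuous surjection $\pi\colon \mathbb R\to J$ onto an interval $J$ that is constant exactly on the closures of the complementary intervals of $P$ and strictly increasing across $P$; concretely $\pi$ can be taken to be the distribution function of a nonatomic Borel probability measure with full support on $P$ (a devil's-staircase map). Two features make this work. The interval $J$ is nondegenerate, because $\pi$ is injective away from the countably many collapsed gaps while $P$ is uncountable. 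More importantly, the fibers of $\pi$ depend only on which intervals meet $P$, hence form a $G$-invariant partition, so each $g$ descends to a well-defined order-preserving bijection $\bar g$ of $J$; as $\pi$ is a quotient map, $\bar g$ and $\bar g^{-1}$ are continuous, so $\bar g\in\mathrm{Homeo}_+(J)$ and $g\mapsto \bar g$ is an orientation-preserving action of $G$ on $J$.

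It remains to transfer transitivity to $\mathbb R$. The image $\pi(Gx)=G\,\pi(x)$ is dense in $\pi(P)=J$, so the descended action has a dense orbit and is therefore topologically transitive on $J$. If $J$ has an endpoint, that endpoint is fixed by every $\bar g$ (being the minimum or maximum of $J$), so the dense orbit must avoid it; thus $\pi(x)$ lies in the interior $J^\circ$, the orbit $G\,\pi(x)$ is dense in $J^\circ$, and $G$ preserves $J^\circ$. Since $J^\circ$ is a nonempty open interval, it is homeomorphic to $\mathbb R$, and conjugating we obtain a topologically transitive orientation-preserving action of $G$ on $\mathbb R$. This contradicts the hypothesis, so $K$ must be countable; being countable and closed it has empty interior, hence is nowhere dense.

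I expect the main obstacle to be the construction of $\pi$ together with the verification that the resulting action is genuinely by homeomorphisms: one must check that the gap-collapsing map has $G$-invariant fibers---this is exactly where orientation-preservation is indispensable---and that the quotient is a nondegenerate interval rather than a single point or a more pathological ordered space. Passing through the perfect kernel $P$, rather than collapsing the gaps of $K$ directly, is what cleanly avoids the pathologies caused by isolated points of $K$ accumulating in chains, which could otherwise create jumps in the quotient order.
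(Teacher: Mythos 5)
Your proof is correct and takes essentially the same route as the paper's: assume some orbit closure $\overline{Gx}$ is uncountable, collapse the complementary gaps to obtain an induced orientation-preserving action on an interval ($[0,1]$, $[0,1)$, $(0,1]$ or $(0,1)$) with a dense orbit, and discard the (necessarily fixed) endpoints to get a topologically transitive action on $\mathbb R$, contradicting the hypothesis. Your additional step of passing to the perfect kernel via Cantor--Bendixson before collapsing is a sound refinement rather than a different method: it rigorously handles isolated points of $\overline{Gx}$ (where adjacent gaps share endpoints and naive gap-collapsing misbehaves), a subtlety that the paper's one-line collapsing argument glosses over.
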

\begin{proof} Assume to the contrary that there is some action $\phi_0: G\rightarrow {\rm Homeo}_+(\mathbb R)$ and some $x_0\in \mathbb R$ such
 that $\overline {Gx_0}$ is uncountable. Then, by collapsing the maximal open intervals of $\mathbb R\setminus \overline {Gx_0}$, we get an
 induced topologically transitive action of $G$ on either $[0, 1]$, or $[0, 1)$, or $(0, 1]$, or $(0, 1)$. By removing the endpoints of the phase
 space of the induced action if necessary, we get a topologically transitive action of $G$ on $\mathbb R$ by orientation-preserving homeomorphisms.
 This contradicts the assumption.

\end{proof}

{\noindent \it Proof of Theorem 2.1.} Assume to the contrary that the following two items hold simultaneously:

{\noindent\bf (a)} $G$ has no topologically transitive action on $\mathbb R$ by orientation-preserving homeomorphisms;

{\noindent\bf (b)} there is an action $\phi:G\rightarrow  {\rm Homeo}_+(\mathbb R)$ such that $\phi$
has no wandering interval.

From Assumption (b) and the definition of wandering interval, there is some $x_1\in \mathbb R$ and some $g_1\in G$
such that $x_1<\phi(g_1)(x_1)$ (otherwise, $\phi(g)={\rm Id}_{\mathbb R}$ for all $g\in G$; then every open interval in $\mathbb R$ is wandering).
Without loss of generality, we suppose that $\{x_1, \phi(g_1)(x_1)\}\subset (0, 1)$.
Set $U_0=(0, 1)$. For the simplicity of notations, we use $g(x)$ instead of $\phi(g)(x)$ in what follows.

Now we define inductively a sequence of open intervals $U_i$ and $g_i\in G$, $i=1, 2, ...$, such that

{\noindent\bf(1)} for each $i\geq 1$, $U_{i-1}\supset U_{i}$;

{\noindent\bf(2)} for every $g\in G$ and every $i\geq 1$, either $g(U_i)=U_i$, or $g(U_i)\cap U_i=\emptyset$;

{\noindent\bf(3)} for every $g\in G$ and every $i\geq 1$, ${\rm diam}(g(U_i)\cap[0, 1])<{1\over i}$;

{\noindent\bf(4)} for each $i\geq 1$, ${\overline {U_i}}\cap g_i({\overline {U_i}})=\emptyset$ and
${\overline {U_i}}\cup g_i({\overline {U_i}})\subset U_{i-1}$.

 For $i=1$, take a sufficiently small interval $V_1\subset (0, 1)$ such that $x_1\in V_1$,
\begin{equation}
{\overline {V_1}}\cup g_1({\overline {V_1}})\subset U_{0},\ \mbox{and}\ {\overline {V_1}}\cap g_1({\overline {V_1}})=\emptyset.
\end{equation}
Take a sufficiently large positive integer $i_1>1$ such that
\begin{equation}
\frac{1}{i_1}<\frac{1}{2}{\rm diam}(V_1).
\end{equation}
Let $A_1=\{k\frac{1}{i_1}:k=0, 1, ..., i_1\}$ and let $B_1={\overline {GA_1}}$. It follows from
Lemma 3.1 that $B_1$ is a nowhere dense $G$-invariant closed subset of $\mathbb R$. From $(3.2)$ and
the definition of $A_1$, there exists a maximal open interval $U_1$ of $\mathbb R\setminus B_1$ such that
$U_1\subset V_1$. Then $(1)-(4)$ hold for $U_1$ and $g_1$ by $(3.1)$, $(3.2)$ and the definition of $U_1$.

Suppose that for $1\leq i\leq k$ we have defined $U_i$ and $g_i$  which satisfy $(1)-(4)$. Then
define $U_{k+1}$ and $g_{k+1}$ as follows. From Assumption (b), $U_k$ is nonwandering, which together with
$(2)$ implies that there is some point $x_{k+1}\in U_k$ and some $g_{k+1}\in G$ such that $g_{k+1}(x_{k+1})\in U_k$ and $g_{k+1}(x_{k+1})>x_{k+1}$.
Take a sufficiently small open interval $V_{k+1}$ such that $x_{k+1}\in V_{k+1}$,
\begin{equation}
{\overline {V_{k+1}}}\cup g_{k+1}({\overline {V_{k+1}}})\subset U_{k},\ \mbox{and}\ {\overline {V_{k+1}}}\cap g_{k+1}({\overline {V_{k+1}}})=\emptyset.
\end{equation}
Take a sufficiently large positive integer $i_{k+1}>k+1$ such that
\begin{equation}
\frac{1}{i_{k+1}}<\frac{1}{2}{\rm diam}(V_{k+1}).
\end{equation}
Let $A_{k+1}=\{k\frac{1}{i_{k+1}}:k=0, 1, ..., i_{k+1}\}$ and let $B_{k+1}={\overline {GA_{k+1}}}$. Similar to the case of $i=1$,
we get a maximal open interval $U_{k+1}$ of $\mathbb R\setminus B_{k+1}$ which satisfy the conditions $(1)-(4)$.

Now we define a sequence of subsets $G_i$ of $G$ for $i=1, 2, ...$ as follows. Let $G_1=\{e, g_1\}$. Assume $G_i$ have been defined for $1\leq i\leq k$.
Then let $G_{k+1}=G_k\cup \{gg_{k+1}: g\in G_k\}$. For each $k=1, 2,...$, set $\Lambda_k=\cup_{g\in G_k}g(\overline{U_k})$, and set $\Lambda=\cap_{k=1}^\infty \Lambda_k$. It follows from $(3)$ and $(4)$ that $\Lambda$ is homeomorphic to the Cantor set, and, for any point $x\in\Lambda$,
$\overline{Gx}\supset \Lambda$ (one may see  Fig. 1 for the illustration of the ideas of the construction). This implies that $G$ has a topologically transitive orientation-preserving action on $\mathbb R$ by Lemma 3.1, which contradicts the assumption (a).
 \hfill{$\Box$}

\begin{figure}[htbp]
\centering
\includegraphics[scale=0.5]{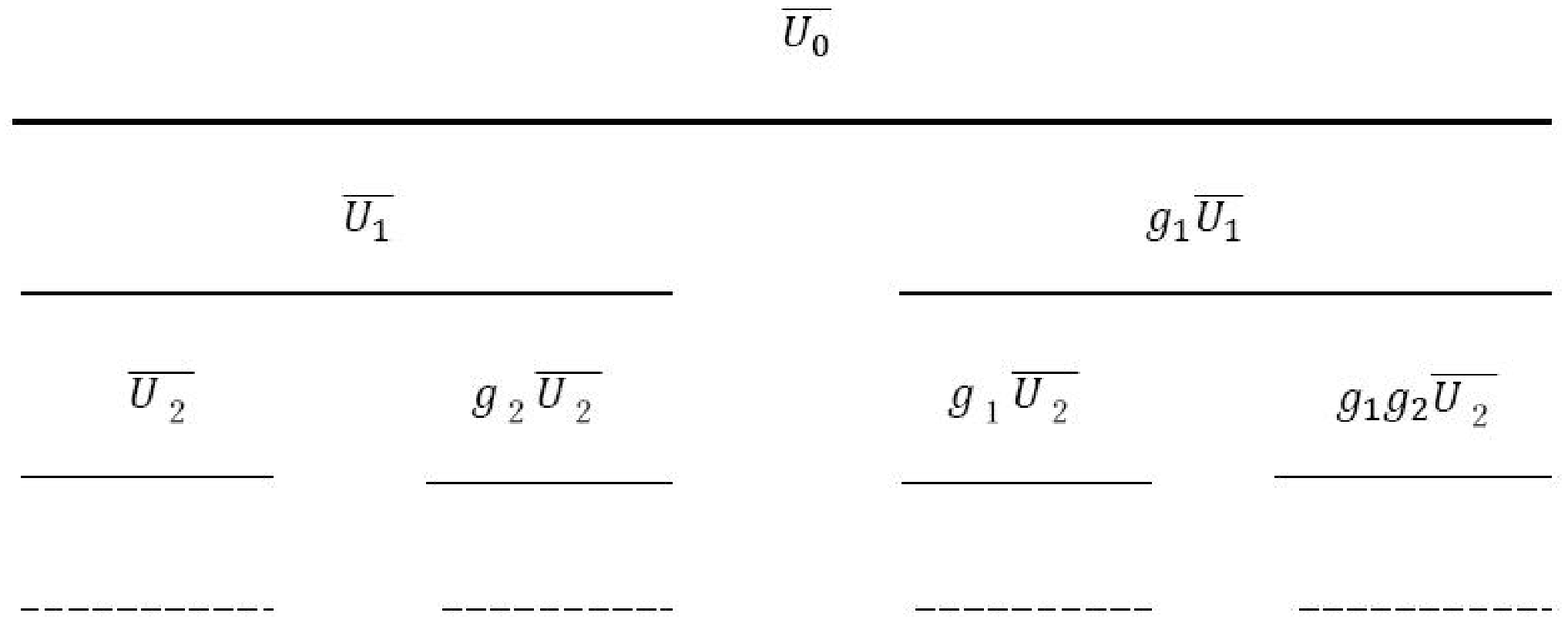}
\centerline{Fig. 1}
\end{figure}

\section{Types of some groups}
In this section, we start to prove Theorem 2.3. From Definition 2.2, we immediately have that all finite groups and the infinite cyclic group
$\mathbb Z$ are of wandering type.

\begin{prop}
Suppose $G=N_0\rhd N_1\rhd...\rhd N_k=\{e\}$ is super-poly-infinite-cyclic and has the name $(-1, -1, ..., -1)$. Then $G$ is of wandering type.
\end{prop}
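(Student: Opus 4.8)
The plan is to argue by induction on the height $k$ of the series and, via the dichotomy of Theorem 2.1, to show that $G$ admits no topologically transitive orientation-preserving action on $\mathbb R$, which is exactly saying that $G$ is of wandering type. The base case $k=1$ is the group $\mathbb Z$, already known to be of wandering type. For the inductive step the key observation is that the bottom of the series is clean: since $N_k=\{e\}$, the name entry $n_{k-2}=-1$ is not merely a congruence but the exact relation $f_{k-2}f_{k-1}f_{k-2}^{-1}=f_{k-1}^{-1}$ in $G$, with no error term lying in $N_k$. This is the relation I would exploit.

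First I would record the elementary fact that if $a,h\in{\rm Homeo}_+(\mathbb R)$ satisfy $aha^{-1}=h^{-1}$, then ${\rm Fix}(h)\neq\emptyset$: otherwise $h$ moves every point strictly in one direction, say $h(x)>x$ for all $x$; conjugation by the orientation-preserving $a$ preserves this property, so $aha^{-1}$ also moves every point to the right, whereas $h^{-1}$ moves every point to the left, a contradiction. Applying this with $h=\phi(f_{k-1})$ and $a=\phi(f_{k-2})$, for an arbitrary action $\phi\colon G\to{\rm Homeo}_+(\mathbb R)$, gives ${\rm Fix}(\phi(f_{k-1}))\neq\emptyset$. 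Since $N_{k-1}=\langle f_{k-1}\rangle$ is normal in $G$, for every $g\in G$ one has $\phi(g)\phi(f_{k-1})\phi(g)^{-1}=\phi(f_{k-1})^{\pm1}$, and therefore $C:={\rm Fix}(\phi(f_{k-1}))$ is a nonempty closed $G$-invariant subset of $\mathbb R$.

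Now suppose, towards a contradiction, that $\phi$ is topologically transitive. If $C=\mathbb R$, then $\phi(f_{k-1})={\rm Id}_{\mathbb R}$, so $N_{k-1}\subseteq\ker\phi$ and $\phi$ factors through $G/N_{k-1}$. This quotient inherits the series $G/N_{k-1}=N_0/N_{k-1}\rhd\cdots\rhd N_{k-1}/N_{k-1}=\{e\}$ of height $k-1$ with name $(-1,\dots,-1)$, so by the inductive hypothesis it is of wandering type and admits no topologically transitive action; since topological transitivity depends only on the image subgroup $\phi(G)$ of ${\rm Homeo}_+(\mathbb R)$, which coincides with the image of the induced $G/N_{k-1}$-action, this contradicts the transitivity of $\phi$. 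If $C\neq\mathbb R$, I would collapse each complementary interval of $C$ to a point exactly as in the proof of Lemma 3.1, obtaining an induced orientation-preserving action $\psi$ of $G$ on the quotient. Because $\phi(f_{k-1})$ fixes $C$ pointwise and carries each complementary interval onto itself, it descends to the identity, so $\psi$ again factors through $G/N_{k-1}$; and since a dense $\phi$-orbit maps onto a dense $\psi$-orbit, $\psi$ is topologically transitive. After deleting endpoints this yields a topologically transitive action of $G/N_{k-1}$ on $\mathbb R$, once more contradicting the inductive hypothesis. The degenerate situations, in which the quotient fails to be a nondegenerate interval, force a global fixed point of $G$ (as when $C$ is a single point), and a global fixed point already precludes a dense orbit; these I would dispatch directly.

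The technical heart, and the step I expect to be the main obstacle, is the collapsing reduction in the case $C\neq\mathbb R$: one must verify that the quotient of $\mathbb R$ by the complementary intervals of $C$ is, after removing endpoints, a copy of $\mathbb R$ on which $\psi$ is a well-defined orientation-preserving and topologically transitive action, and carefully isolate the degenerate cases. All of this runs parallel to the collapsing construction already carried out for Lemma 3.1, so I would invoke that argument rather than repeat it. By contrast, the verification that $G/N_{k-1}$ is super-poly-infinite-cyclic of height $k-1$ with name $(-1,\dots,-1)$ is routine, since each $N_{i+1}$ is normal in $G$ and the defining relations descend unchanged to the quotient.
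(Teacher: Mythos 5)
Your route is genuinely different from the paper's (the paper never invokes Theorem 2.1; it directly constructs a wandering interval for an arbitrary action), and much of your inductive set-up is sound: the exact relation $f_{k-2}f_{k-1}f_{k-2}^{-1}=f_{k-1}^{-1}$, the fixed-point lemma for $aha^{-1}=h^{-1}$, the $G$-invariance of $C={\rm Fix}(\phi(f_{k-1}))$, and the case $C=\mathbb R$ are all correct. The gap is the case $\emptyset\neq C\neq\mathbb R$. The collapsing construction of Lemma 3.1 produces an interval only because the set collapsed there, $\overline{Gx_0}$, is assumed \emph{uncountable}: a continuous monotone surjection onto a nondegenerate interval which is constant on each complementary interval of $C$ cannot exist when $C$ is countable, since the complementary intervals contribute only countably many values, forcing $\pi(C)$ to be uncountable. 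Nothing in your argument prevents $C$ from being countable --- for instance a discrete bi-infinite sequence --- and in that case the quotient by the complementary intervals is a countable non-Hausdorff space, not a $1$-manifold, so no action of $G/N_{k-1}$ on $\mathbb R$ is obtained and the inductive hypothesis cannot be applied.

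Your proposed dispatch of these degenerate situations, namely that they force a global fixed point of $G$, is false, and topological transitivity does not exclude them either. Concretely: let $K_0$ be any topologically transitive subgroup of ${\rm Homeo}_+((0,1))$ (conjugate the group of Example 1.3 into $(0,1)$), extend each $h\in K_0$ to $\tilde h\in{\rm Homeo}_+(\mathbb R)$ by $\tilde h(x)=h(x-n)+n$ for $x\in[n,n+1]$, and let $\Gamma=\langle L_1,\ \tilde h:h\in K_0\rangle$. Then $\Gamma$ acts topologically transitively on $\mathbb R$ (the orbit of $1/2$ is dense), $\mathbb Z$ is a nonempty proper closed $\Gamma$-invariant set, and $\Gamma$ has no global fixed point. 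So at the stage of your proof where only transitivity and invariance of $C$ are available, the countably infinite case cannot be dismissed; ruling it out would require bringing the name $(-1,\dots,-1)$ to bear on the dynamics inside the complementary intervals of $C$, and that is exactly where the difficulty of the proposition is concentrated. The paper's proof handles this head-on: it takes the maximal component $J_l$ of $\mathbb R\setminus{\rm Fix}(f_l)$ containing $J_{l+1}$, shows its endpoints lie in ${\rm Fix}(N_l)$, and uses the relation $(4.1)$ in the limit computation $\alpha_l=\lim_{m\to+\infty}f_{l-1}f_l^{-m}(w)=\lim_{m\to+\infty}f_l^m(f_{l-1}(w))=\beta_l$ to force $f_{l-1}(J_l)\cap J_l=\emptyset$, thereby producing a wandering interval for an arbitrary action regardless of whether the fixed-point sets involved are countable. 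Until your countable case is closed, the inductive step, and hence the proof, is incomplete.
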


\begin{proof}
By the hypothesis, we can take a sequence $g_i\in N_i\setminus N_{i+1}$ and take $g_k=e$ such that $N_i=\langle g_i, ..., g_{k-1}\rangle$ and
\begin{equation}
g_ig_{i+1}N_{i+2}=g_{i+1}^{-1}g_iN_{i+2}\ \mbox{ for\ each}\ i: 0\leq i\leq k-2.
\end{equation}
Let $\phi:G\rightarrow {\rm Homeo}_+(\mathbb R)$ be any orientation-preserving action of $G$ on $\mathbb R$ and let $f_i=\phi(g_i)$ for
each $i$. If $f_i={\rm Id}_{\mathbb R}$ for each $i$, then every open interval in $\mathbb R$ is wandering by Definition 2.2. So,  we may as well suppose
that
\begin{equation}
\mathbb R\setminus {\rm Fix}(N_{k-1})\not=\emptyset.
\end{equation}

{\bf Claim A.} There exists a sequence of open intervals $J_i$, $0< i\leq k$, such that
$J_1\supset ... \supset J_k$, $J_i\subset \mathbb R\setminus {\rm Fix}(f_{i})$ for $i<k$, $f_{i-1}(J_i)\cap J_i=\emptyset$ and $f_{j}(J_i)=J_i$ for $j\geq i.$

We prove this claim by induction. Take a maximal open interval $J_{k-1}$ in $\mathbb R\setminus {\rm Fix}(N_{k-1})$ by $(4.2)$, and take an open interval $J_k\subset J_{k-1}$ such that $f_{k-1}(J_k)\cap J_k=\emptyset$.
Assume that, for some $l\geq 0$, we have obtained open intervals $J_{l+1}\supset... \supset J_k$ such that
$J_i\subset \mathbb R\setminus {\rm Fix}(f_{i})$ for each $i: k>i>l$,
$f_{i-1}(J_i)\cap J_i=\emptyset\ {\mbox {for}}\ i>l,\ {\mbox {and}}\ f_{j}(J_i)=J_i\ {\mbox {for}} \ j\geq i.$ Let $J_i=(\alpha_i, \beta_i)$, $k\geq i\geq l+1$.
Let $J_l=(\alpha_l, \beta_l)$ be the maximal open interval of $\mathbb R\setminus {\rm Fix}(f_{l})$, which contains $J_{l+1}$.
Since either $\alpha_l=\lim\limits_{m\to+\infty}f^m(\alpha_{l+1})$ or $\alpha_l=\lim\limits_{m\to-\infty}f^m(\alpha_{l+1})$,
and  $\alpha_{l+1}\in {\rm Fix}(N_{l+1})$, we have $\alpha_l\in {\rm Fix}(N_l)$. Similarly, $\beta_{l}\in {\rm Fix}(N_{l})$. So, $f_{j}(J_l)=J_l$ for $j\geq l.$ In addition, we have $f_{l-1}(J_{l})\cap J_{l}=\emptyset.$ Otherwise, $f_{l-1}(J_{l})=J_{l}$ since $f_{l-1}({\rm Fix}(N_{l}))={\rm Fix}(N_{l})$ by $(4.1)$. Without loss of generality, we suppose $f_{l}(x)>x$ for every $x\in J_{l}$.
Let $w=\alpha_{l+1}$.
Then $w\in {\rm Fix}(N_{l+1})$, and $\alpha_{l}=\lim\limits_{m\to+\infty}f_{l-1}f_{l}^{-m}(w)=\lim\limits_{m\to+\infty}f_{l}^{m}(f_{l-1}(w))=\beta_{l}$ by $(4.1)$, which is a contradiction.
Thus we complete the proof of Claim A.

{\bf Claim B.} $J_k$ is a wandering interval of $\phi$. In fact, for any $g\in G$, $\phi(g)$ can be expressed as
$\phi(g)=f_0^{n_0}f_1^{n_1}... f_{k-1}^{n_{k-1}}$ for some integers $n_0, n_1, ..., n_{k-1}$.
If $\phi(g)(J_k)\cap J_k\not=\emptyset$, then it follows from Claim A that $n_0=n_1=...=n_{k-1}=0$. Thus
$\phi(g)={\rm Id}_{\mathbb R}$.

From Claim B, we get that $G$ is of wandering type.
\end{proof}

\begin{lem}
Let $H$ be a normal subgroup of $G$ such that $G/H$ is infinite cyclic. Then
$G$ is of transitive type provided that $H$ is of transitive type.
\end{lem}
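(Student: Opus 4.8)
The plan is to start from a transitive action of the normal subgroup $H$ on $\mathbb R$ and build a transitive action of $G$ that extends it. Since $G/H \cong \mathbb Z$, pick an element $t \in G$ whose image generates $G/H$, so that every $g \in G$ factors uniquely as $g = t^n h$ with $n \in \mathbb Z$ and $h \in H$. A transitive action of $H$ gives a point $x_0$ with $\overline{Hx_0} = \mathbb R$. The idea is to use infinitely many ``copies'' of the $H$-action, indexed by $\mathbb Z$, and let $t$ act by shifting the index while also acting within each copy, so that the combined orbit closure becomes all of $\mathbb R$.

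**The key obstacle: making $t$ conjugate the $H$-action correctly.** The main difficulty is that $t$ does not centralize $H$: conjugation by $t$ is an automorphism $\alpha$ of $H$, and any action we build must satisfy the relation $\psi(t)\,\psi(h)\,\psi(t)^{-1} = \psi(\alpha(h))$ for all $h \in H$. I would handle this by placing translated copies of the transitive $H$-action on a sequence of disjoint open intervals tiling $\mathbb R$. Concretely, choose a sequence of disjoint open intervals $\{I_n\}_{n \in \mathbb Z}$ whose union is dense in $\mathbb R$ and whose complement is a nowhere dense closed set respecting some natural order, and on each $I_n$ install a conjugated copy $\phi_n$ of the given transitive $H$-action (the conjugating homeomorphism identifying $\mathbb R$ with $I_n$). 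One then defines $\psi(t)$ to be a homeomorphism carrying $I_n$ onto $I_{n+1}$ (in an orientation-preserving way) chosen so that $\psi(t)$ intertwines $\phi_n$ and $\phi_{n+1}\circ\alpha$; the freedom to pick the conjugating homeomorphisms and the map between intervals is exactly what lets us satisfy the required relation. Verifying that $\psi$ is a well-defined homomorphism on all of $G$ then reduces to checking this single intertwining identity together with the $H$-action relations on each $I_n$.

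**Establishing transitivity of the extended action.** Once $\psi$ is constructed, I would exhibit a dense orbit. Take the basepoint $x_0$ lying in, say, $I_0$, corresponding to the point with dense $H$-orbit in that copy. Within $I_0$, the $H$-orbit of $x_0$ is already dense in $I_0$; applying powers of $t$ spreads this dense set across all the intervals $I_n$, since $\psi(t)^n$ maps $I_0$ onto $I_n$ and carries the dense $H$-orbit to a dense subset of $I_n$ (up to the automorphism $\alpha$, which does not affect density). Hence $\overline{\psi(G)x_0} \supseteq \overline{\bigcup_n I_n} = \mathbb R$, using that $\bigcup_n I_n$ is dense. By the orbit-density criterion recorded in the Preliminaries (valid since we may assume $G$ countable, or else argue directly with the topological-transitivity definition), this yields topological transitivity, so $G$ is of transitive type.

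**Remarks on what needs care.** The step I expect to be most delicate is arranging the intervals $I_n$ and the shift $\psi(t)$ so that the nowhere-dense ``gap'' set $\mathbb R \setminus \bigcup_n I_n$ is itself preserved and so that $\psi(t)$ is a genuine orientation-preserving homeomorphism of all of $\mathbb R$ (not merely of the union of intervals). This is where one must specify the endpoints' limiting behavior carefully, ensuring $\psi(t)$ extends continuously across the gaps; a convenient device is to take the $I_n$ to accumulate only at $\pm\infty$ so that $\psi(t)$ can be taken to converge to $\pm\infty$ at the ends and the gap set is discrete. The verification of the intertwining relation and of the homomorphism property is then essentially formal, so the construction of the correctly-fitting interval decomposition is the crux.
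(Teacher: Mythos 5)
Your overall architecture is the same as the paper's: tile $\mathbb R$ by intervals $I_n$ permuted by a shift $\psi(t)$, install a copy of a transitive $H$-action on each $I_n$, reduce the homomorphism property to the single conjugation relation $\psi(t)\psi(h)\psi(t)^{-1}=\psi(\alpha(h))$, and deduce transitivity from a dense orbit (the paper takes $I_n=(n,n+1)$ and $\psi(t)\colon x\mapsto x+1$). However, there is a genuine gap at exactly the point you yourself call the crux. If, as you write, each $\phi_n$ is a copy of \emph{the given} transitive $H$-action transported by a homeomorphism $\tau_n\colon\mathbb R\to I_n$, then the intertwining requirement $T_n\,\phi_n(h)\,T_n^{-1}=\phi_{n+1}(\alpha(h))$ is equivalent, after conjugating back through $\tau_n$ and $\tau_{n+1}$, to the statement that $\phi_0$ and $\phi_0\circ\alpha$ are topologically conjugate actions of $H$ on $\mathbb R$. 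No choice of the $\tau_n$'s or of $T_n$ can remove this constraint, and it can genuinely fail: take $H=\mathbb Z^2=\langle u,v\rangle$ with $\phi_0(u)\colon x\mapsto x+1$, $\phi_0(v)\colon x\mapsto x+\sqrt2$ (a transitive action), and $\alpha$ the automorphism swapping $u$ and $v$. An orientation-preserving conjugacy $\Phi$ would have to satisfy $\Phi(m+n\sqrt2)=\Phi(0)+m\sqrt2+n$ monotonically, which fails (from $1<\sqrt2$ one would need $\sqrt2<1$), while an orientation-reversing one is impossible since a decreasing homeomorphism cannot conjugate the positive translation $x\mapsto x+1$ to the positive translation $x\mapsto x+\sqrt2$. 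So ``the freedom to pick the conjugating homeomorphisms and the map between intervals'' does not suffice.

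The repair is small, but it is the actual content of the lemma, and it is what the paper's formula accomplishes: the copy installed on $I_n$ must be the transport of the \emph{twisted} action $\phi_0\circ\alpha^{-n}$ rather than of $\phi_0$ itself; equivalently, define the copies recursively by $\phi_{n+1}(h):=T_n\,\phi_n(\alpha^{-1}(h))\,T_n^{-1}$. Twisting by a group automorphism preserves both the property of being an action and topological transitivity, so this costs nothing, and with these twisted copies the unit translation intertwines correctly --- this is precisely the paper's definition $\phi(b)(x)=\phi(a^{-j}ba^j)(x-j)+j$ on $[j,j+1]$. Your transitivity argument (dense $H$-orbit in $I_0$, spread across the $I_n$ by powers of $\psi(t)$, and dense orbit implying transitivity directly, with no countability needed) is fine once the construction is corrected; indeed your parenthetical ``up to the automorphism $\alpha$'' suggests you had the twisted copies half in mind, but the construction step as written would fail.
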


\begin{proof}
Suppose $G/H=\langle aH \rangle$ for some $a\in G\setminus H$. Let $a$ act on the line by the unit translation $\phi(a):x\mapsto x+1$. By the assumption, $H$
has an orientation-preserving topologically transitive action on $(0, 1)$, which extends to an action $\phi$ on the interval
$[0, 1]$ by fixing the endpoints. Then extend this $H$ action to $G$ action on the line by setting, for each $j\in\mathbb Z$,
\begin{equation}
\phi(b)(x)=\phi(a^{-j}ba^j)(x-j)+j
\end{equation}
for all $x\in[j,j+1]$ and $b\in H$. Then define $\phi$ on $G$ by setting $\phi(a^ib)=(\phi(a))^i\phi(b)$ for all $i\in\mathbb Z$ and $b\in H$.
It is direct to check that $\phi(g_1g_2)=\phi(g_1)\phi(g_2)$ for arbitrary two elements
$g_1=a^{l_1}b_1$ and $g_2=a^{l_2}b_2$ in $G$ where $b_1, b_2\in H$.
In fact, for all $x\in [i, i+1]$, we have
$$\begin{array}{rl}
\phi(g_1g_2)(x)&=\phi(a^{l_1}b_1a^{l_2}b_2)(x)\\

               &=\phi(a^{l_1+l_2}a^{-l_2}b_1a^{l_2}b_2)(x)\\

               &=\phi(a^{l_1+l_2})\phi(a^{-l_2}b_1a^{l_2}b_2)(x)\\

               &=\phi(a^{-i}a^{-l_2}b_1a^{l_2}b_2a^i)(x-i)+i+l_1+l_2,
\end{array}
$$
while
$$\begin{array}{rl}
\phi(g_1)\phi(g_2)(x)&=\phi(a^{l_1}b_1)\phi(a^{l_2}b_2)(x)\\

               &=\phi(a^{l_1}b_1)(\phi(a^{-i}b_2a^i)(x-i)+i+l_2)\\

               &=\phi(a^{-i-l_2}b_1a^{i+l_2})\phi(a^{-i}b_2a^{i})(x-i)+i+l_1+l_2\\

               &=\phi(a^{-i}a^{-l_2}b_1a^{l_2}b_2a^i)(x-i)+i+l_1+l_2,
\end{array}
$$
as required.
Thus
$\phi$ is an orientation-preserving action of $G$ on the line and the topological transitivity of $\phi$ is clear.
\end{proof}

\begin{prop}
Suppose $G=N_0\rhd N_1\rhd...\rhd N_k=\{e\}$ is super-poly-infinite-cyclic with $k\geq 2$ and has the name $(n_0, n_1, ..., n_{k-2})$ with some $n_i=1$.
Then $G$ is of transitive type.
\end{prop}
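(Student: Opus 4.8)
The plan is to induct on the length $k$, distinguishing the two cases $n_0=1$ and $n_0\ne1$. The only ingredient I need beyond the results already established is the elementary remark that transitive type is inherited from quotients: if $K$ is a normal subgroup of $G$ and $G/K$ is of transitive type, then so is $G$. Indeed, if $\psi\colon G/K\to{\rm Homeo}_+(\mathbb R)$ is a topologically transitive orientation-preserving action and $\pi\colon G\to G/K$ is the quotient map, then $\psi\circ\pi$ is again an orientation-preserving action of $G$, and since $\pi$ is surjective its orbits coincide with those of $\psi$; hence $\psi\circ\pi$ is topologically transitive.

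First I would dispose of the case $n_0=1$. Here I claim that $G/N_2\cong\mathbb Z^2$. Because every $N_i$ is normal in $G$, the group $G/N_2$ sits in an extension $1\to N_1/N_2\to G/N_2\to G/N_1\to1$ with both $N_1/N_2\cong\mathbb Z$ and $G/N_1\cong\mathbb Z$ infinite cyclic; the conjugation action of the generator $f_0N_1$ on $N_1/N_2=\langle f_1N_2\rangle$ is trivial exactly because $n_0=1$, and an extension of $\mathbb Z$ by $\mathbb Z$ with trivial action is $\mathbb Z^2$. Since $\mathbb Z^2$ is of transitive type by Example 1.2, the quotient remark above shows that $G$ is of transitive type. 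This in particular settles the base case $k=2$, where the name is the single entry $(n_0)$ and the hypothesis forces $n_0=1$.

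It remains to handle $n_0\ne1$. Then the hypothesis provides some $n_i=1$ with $i\ge1$, so that $k\ge3$. I would pass to $N_1$: since $N_1\supset N_2\supset\cdots\supset N_k=\{e\}$ are all normal in $G$, this realizes $N_1$ as a super-poly-infinite-cyclic group of length $k-1\ge2$ whose name is the tail $(n_1,\ldots,n_{k-2})$, which still contains an entry equal to $1$. By the induction hypothesis $N_1$ is of transitive type, and since $N_1$ is a normal subgroup of $G$ with $G/N_1\cong\mathbb Z$, Lemma 4.2 gives that $G$ is of transitive type, completing the induction.

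I expect the only subtlety to be the bookkeeping of the induction rather than any hard estimate: peeling off the top $\mathbb Z$ through $N_1$ and Lemma 4.2 can never reach the case $n_0=1$, since there $N_1$ has name $(-1,\ldots,-1)$ and is of wandering type by Proposition 4.1; that case therefore genuinely needs the separate quotient argument via $G/N_2\cong\mathbb Z^2$.
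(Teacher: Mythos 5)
Your proof is correct, and it runs on exactly the paper's ingredients---Example 1.2, Lemma 4.2, and the fact that transitive type passes from a quotient $G/K$ to $G$---but organized in the dual order. The paper anchors the $\mathbb Z^2$ at the \emph{bottom} of the chain: if $n_{k-2}=1$ then the subgroup $N_{k-2}$ is itself isomorphic to $\mathbb Z^2$, repeated applications of Lemma 4.2 climb the chain up to $G$, and the general case $n_i=1$ with $i<k-2$ is reduced to this one by passing to the quotient $G/N_{i+2}$, whose name is the truncation $(n_0,\dots,n_i)$ ending in $1$. You anchor the $\mathbb Z^2$ at the \emph{top} instead---when $n_0=1$ you identify the quotient $G/N_2$ with $\mathbb Z^2$---and reduce the general case by inducting down the subgroup chain through $N_1$, whose name is the tail $(n_1,\dots,n_{k-2})$. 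Unwound, the two arguments apply Lemma 4.2 the same number of times and the factor observation once, merely in opposite orders (you use the factor observation at the bottom of the chain, the paper at the top), so neither is more economical; your version does have the merit of spelling out two points the paper leaves implicit, namely the proof of the factor observation and the identification of $G/N_2$ as a central extension of $\mathbb Z$ by $\mathbb Z$, and your name bookkeeping is done for subgroups (where it is immediate) rather than for quotients. One quibble with your closing remark: it is not true that $n_0=1$ forces $N_1$ to have name $(-1,\dots,-1)$---the tail may well contain further entries equal to $1$---what is true is that it \emph{can} have that name (e.g.\ for the name $(1,-1,\dots,-1)$), and that is the correct reason why the case $n_0=1$ cannot be absorbed into the induction step and genuinely needs the separate quotient argument.
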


\begin{proof}
If $n_{k-2}=1$, then $N_{k-2}$ is isomorphic to $\mathbb Z^2$, which is of transitive type by Example 1.2. By repeated applications of Lemma 4.2, we get that
$G$ is of transitive type. If $n_i=1$ for some $i<k-2$, then $G/N_{i+2}$ is of transitive type by the previous argument. Since $G/N_{i+2}$ is a factor of $G$,
$G$ is of transitive type.
 \end{proof}

\begin{prop}
Suppose $G$ is poly-infinite-cyclic. If $G$ is not super-poly-cyclic, then $G$ is of transitive type.
\end{prop}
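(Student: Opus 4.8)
The plan is to induct on the Hirsch length $h(G)$, i.e. on the length $k$ of a poly-infinite-cyclic series $G=N_0\rhd N_1\rhd\cdots\rhd N_k=\{e\}$, reducing everything to two inputs. First, Example 1.2 gives that $\mathbb Z^2$ is of transitive type, and hence — via the quotient principle already used in Proposition 4.3, that a topologically transitive action of a quotient pulls back to the group — so is every $\mathbb Z^d$ with $d\ge 2$, since $\mathbb Z^d$ surjects onto $\mathbb Z^2$. Second, Lemma 4.2 lets transitivity climb a subnormal series: if $G=L_0\rhd L_1\rhd\cdots\rhd L_r=\{e\}$ is any subnormal series all of whose successive quotients are infinite cyclic and some term $L_j$ is of transitive type, then applying Lemma 4.2 to $L_{i+1}\lhd L_i$ for $i=j-1,\dots,0$ shows $G$ is of transitive type. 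The base cases $h(G)\le 2$ are vacuous, since every poly-infinite-cyclic group of Hirsch length at most $2$ (namely $\{e\}$, $\mathbb Z$, $\mathbb Z^2$, and the Klein bottle group) is super-poly-cyclic.

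For the inductive step I would apply Lemma 4.2 to $N_1\lhd G$ with $G/N_1\cong\mathbb Z$: it suffices to prove that $N_1$, which is poly-infinite-cyclic of Hirsch length $h(G)-1$, is of transitive type. If $N_1$ is not super-poly-cyclic this is the induction hypothesis, and we are done. The essential case is therefore that $N_1$ is super-poly-cyclic while $G$ is not, and in view of the climbing principle the whole problem reduces to the following: \emph{produce a normal subgroup $A\lhd G$ that is of transitive type and for which $G/A$ is torsion-free.} Indeed, interleaving a poly-infinite-cyclic series of the torsion-free polycyclic group $G/A$ with one of $A$ yields a subnormal series of $G$ with infinite cyclic quotients in which $A$ occurs as a term, and then the climb finishes the argument.

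To find $A$ I would analyse the minimal normal subgroups of $G$. Since $G$ is torsion-free, none of them is finite, so each is free abelian with $G$ acting $\mathbb Q$-irreducibly. If some minimal normal $M\cong\mathbb Z^d$ has rank $d\ge 2$ and $G/M$ is torsion-free, then $A=M$ works, because $\mathbb Z^d$ is of transitive type. To guarantee the existence of a rank-$\ge2$ free-abelian normal section I would use non-super-poly-cyclicity, which says $G$ is not supersolvable, so some chief factor is non-cyclic; torsion-freeness then upgrades this to a free-abelian chief factor of rank $\ge 2$, since an element of $G$ acting trivially on a free-abelian characteristic section is torsion, hence trivial, so the induced finite linear actions are faithful, and a faithful finite action whose rational constituents were all one-dimensional would make $G$ supersolvable. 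Thus $G$ has a chief factor $V\cong\mathbb Z^d$, $d\ge2$, with $\mathbb Q$-irreducible $G$-action, and $V$ (suitably isolated inside $G$) is the source of the transitive-type normal subgroup $A$.

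The step I expect to be the main obstacle is securing \emph{simultaneously} that $A$ is of transitive type and that $G/A$ is torsion-free. Realizing the $\mathbb Q$-irreducible rank-$\ge2$ factor as an honest normal subgroup may force $A$ to be only virtually $\mathbb Z^d$: the factor need not be a minimal normal subgroup, and the naive isolator of a normal subgroup of a polycyclic group need not yield a torsion-free quotient — already $G/A$ can behave like an infinite dihedral group. When $A$ strictly contains its $\mathbb Z^d$, one must either re-enter the induction (if $A$ has smaller Hirsch length and is not super-poly-cyclic) or, in the extreme case where $A=G$ is virtually $\mathbb Z^d$ with a non-supersolvable finite holonomy, build a topologically transitive action directly, since there the $\mathbb Z$-climbing of Lemma 4.2 is unavailable and a finite-index transitive subgroup need not make $G$ transitive — the Klein bottle group, which is of wandering type yet contains $\mathbb Z^2$ with index $2$, shows that both torsion-freeness and non-supersolvability are genuinely needed. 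Handling this virtually-abelian, torsion-free, non-supersolvable situation is the crux of the argument.
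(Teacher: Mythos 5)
There is a genuine gap, and you flag it yourself: the case you call the crux --- $G$ torsion-free, virtually $\mathbb{Z}^d$, non-supersolvable, where the $\mathbb{Z}$-climbing of Lemma 4.2 is unavailable --- is reduced to but never solved, so the argument is incomplete. Worse, the reduction that produces this case rests on unsound steps. (i) The objects you propose to analyse need not exist: infinite polycyclic groups need not have minimal normal subgroups or chief series at all ($\mathbb{Z}^2$ has no minimal normal subgroup, and $\mathbb{Z}$ has no chief series). (ii) Your climbing step actually requires $G/A$ to be poly-infinite-cyclic, and torsion-freeness does not deliver this: the Hantzsche--Wendt group (the $3$-dimensional Bieberbach group with holonomy $(\mathbb{Z}/2)^2$) is torsion-free and polycyclic but has finite abelianization, hence is not indicable and so admits no poly-infinite-cyclic series; since orderability does not pass to quotients, nothing prevents $G/A$ from being of this kind. (iii) The claimed upgrade ``non-cyclic chief factor $\Rightarrow$ free abelian of rank $\geq 2$'' via the assertion that an element acting trivially on a free-abelian section must be torsion is also false (the center of the discrete Heisenberg group acts trivially on all sections of the lower central series and is not torsion). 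So the target ``produce $A\lhd G$ of transitive type with $G/A$ torsion-free'' is both insufficient and unreachable by the route you sketch.

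The missing idea --- which is the paper's --- is to pass to a quotient of $G$ \emph{before} hunting for a normal $\mathbb{Z}^d$, instead of insisting that the rank-$\geq 2$ free abelian section sit normally inside $G$ itself with a well-behaved complement. Proposition 6.4 (iterated torsion-free abelianization) produces a series $G=N_0\rhd N_1\rhd\cdots\rhd N_k=\{e\}$ in which every $N_i$ is characteristic, hence normal, in $G$ and every $N_i/N_{i+1}$ is free abelian of finite rank. If all these factors had rank one, this series would witness that $G$ is super-poly-cyclic, contradicting the hypothesis; so some $N_{i'}/N_{i'+1}\cong\mathbb{Z}^d$ with $d\geq 2$. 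Now set $\tilde G=G/N_{i'+1}$: in $\tilde G$ that factor \emph{is} an honest normal subgroup of transitive type, and $\tilde G/(N_{i'}/N_{i'+1})\cong G/N_{i'}$ is poly-(free abelian), hence poly-infinite-cyclic, so repeated application of Lemma 4.2 shows $\tilde G$ is of transitive type; finally, a topologically transitive action of the quotient $\tilde G$ pulls back along $G\to\tilde G$ to one of $G$. This maneuver dissolves exactly the difficulty that blocked you: no torsion, no finite holonomy, and no virtually-abelian case ever appears, and your (correct) Klein-bottle warning that finite-index transitive subgroups do not suffice never comes into play, because all climbing is along genuine infinite cyclic quotients inside quotients of $G$.
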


\begin{proof}
By Proposition 6.4 in the appendix, there is a decreasing sequence of normal subgroups of $G$: $G=N_0\rhd N_1\rhd...\rhd N_k=\{e\}$ for some $k>0$, such that
each $N_i/N_{i+1}$ is isomorphic to ${\mathbb Z}^{d_i}$ for some $d_i\geq 1$. If
every $N_i/N_{i+1}$ is isomorphic to ${\mathbb Z}$, then $G$ is super-poly-cyclic, which contradicts the assumption. So, there is some
$i'$ such that $N_{i'}/N_{i'+1}$ is isomorphic to ${\mathbb Z}^{d_{i'}}$ with $d_{i'}\geq 2$. Let ${\tilde G}=G/N_{i'+1}$.
Since $\tilde N_{i'}\equiv N_{i'}/N_{i'+1}$ is of transitive type by Example 1.2, ${\tilde G}$ is of transitive type by repeated applications of Lemma 4.2
(note that ${\tilde G}/\tilde N_{i'}$ is poly-infinite-cyclic). Then $G$ is of transitive type since $\tilde G$ is a factor of $G$.
\end{proof}

\begin{prop}
The Baumslag-Solitar group $B(1, n)$ is of transitive type if and only if $n\not=0$ and $n\not=-1$.
\end{prop}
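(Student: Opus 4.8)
The plan is to exploit the standard description of the Baumslag--Solitar group as an ascending HNN extension. For $n\neq 0$ the group $B(1,n)$ is isomorphic to the semidirect product $\mathbb Z[1/n]\rtimes\mathbb Z$, in which the normal closure $H$ of $a$ is the additive group $\mathbb Z[1/n]=\{k/n^j:k\in\mathbb Z,\ j\geq 0\}$, the element $b$ generates a complementary $\mathbb Z$ acting on $H$ by multiplication by $n$, and $B(1,n)/H\cong\mathbb Z$. First I would record this short exact sequence $1\to \mathbb Z[1/n]\to B(1,n)\to\mathbb Z\to 1$, which puts me in position to apply Lemma 4.2; everything then reduces to deciding when the fiber $H$ is of transitive type.

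For the sufficiency direction (that $n\neq 0,-1$ implies transitive type) I would argue as follows. When $|n|\geq 2$, the subgroup $\mathbb Z[1/n]$ is dense in $\mathbb R$, since $|n|^{-j}\to 0$; hence the translation action $r\mapsto L_r$ of $\mathbb Z[1/n]$ on $\mathbb R$ has every orbit dense and is in particular topologically transitive, so $H$ is of transitive type. As $B(1,n)/H\cong\mathbb Z$, Lemma 4.2 immediately yields that $B(1,n)$ is of transitive type, and this single argument covers both $n\geq 2$ and $n\leq -2$. The remaining value $n=1$ escapes this scheme because $\mathbb Z[1/1]=\mathbb Z$ fails to be dense; here one uses instead that $B(1,1)\cong\mathbb Z^2$, which is of transitive type by Example 1.2.

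For the necessity direction I would dispose of the two excluded values directly. When $n=0$ the defining relation forces $a=e$, so $B(1,0)\cong\mathbb Z$, which is of wandering type by the opening remark of this section. When $n=-1$ the group $B(1,-1)$ is super-poly-infinite-cyclic with name $(-1)$, hence of wandering type by Proposition 4.1. Since a topologically transitive action admits no wandering interval (for disjoint nonempty open subintervals $U,V$ of a putative wandering interval $W$, transitivity would produce $g$ with $g(U)\cap V\neq\emptyset$, forcing $g(W)\cap W\neq\emptyset$ and hence $g|_W=\mathrm{Id}$, so that $g(U)\cap V=U\cap V=\emptyset$, a contradiction), a group of wandering type cannot be of transitive type; thus $n\in\{0,-1\}$ indeed precludes transitive type. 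The one genuine obstacle is the sufficiency for $n\leq -2$: the naive action in which $b$ acts by $x\mapsto nx$ is orientation-reversing and so cannot be used directly. The purpose of routing through the abelian normal subgroup $\mathbb Z[1/n]$ and Lemma 4.2 is exactly to replace this forbidden multiplication by the orientation-preserving unit translation on the $\mathbb Z$-quotient, while the dense translation action of $\mathbb Z[1/n]$ supplies the transitivity along the fiber.
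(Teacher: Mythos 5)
Your proof is correct, but it takes a genuinely different route from the paper's in both nontrivial directions. For sufficiency, the paper argues by explicit examples: for $n\geq 2$ it maps $B(1,n)$ onto the affine group $\langle L_1, M_n\rangle$ of Example 1.3, and for $n\leq -2$ (where $M_n$ is orientation-reversing) it invokes the piecewise-power homeomorphism of Example 1.4 satisfying $fgf^{-1}=g^{-k}$; you instead give a uniform algebraic argument via the decomposition $B(1,n)\cong \mathbb Z[1/n]\rtimes\mathbb Z$, the density of $\mathbb Z[1/n]$ for $|n|\geq 2$, and Lemma 4.2, which neatly sidesteps the orientation problem for negative $n$ in one stroke. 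For necessity at $n=-1$, the paper gives a self-contained wandering-interval argument (its Claims A--D about ${\rm Fix}(g)$ and the relation $fg=g^{-1}f$); you instead observe that $B(1,-1)$ is super-poly-infinite-cyclic with name $(-1)$ and cite Proposition 4.1, which indeed subsumes the paper's direct proof. The trade-off: your route is shorter and more conceptual, and you also make explicit a point the paper leaves implicit, namely that a topologically transitive action admits no wandering interval, so the two types are mutually exclusive. The cost is that you lean on the standard but nontrivial structure theorem $B(1,n)\cong\mathbb Z[1/n]\rtimes_n\mathbb Z$, which the paper never establishes; strictly speaking you only need the easy half of it --- the surjection $B(1,n)\twoheadrightarrow \mathbb Z[1/n]\rtimes_n\mathbb Z$ given by von Dyck's theorem, combined with the fact (used elsewhere in the paper, e.g.\ Propositions 4.3 and 4.4) that a group is of transitive type whenever one of its quotients is --- so it would be cleaner to phrase the argument that way rather than asserting the isomorphism. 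Similarly, applying Proposition 4.1 to $B(1,-1)$ requires knowing $\langle a\rangle\cong\mathbb Z$ there, which again follows from the surjection onto $\mathbb Z\rtimes_{-1}\mathbb Z$; worth one sentence. The paper's approach, by contrast, is entirely self-contained at the price of the delicate explicit construction in Example 1.4.
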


\begin{proof} $(\Rightarrow)$ Suppose $B(1, n)=\langle a, b:  ba=a^nb \rangle$. If $n=0$, then $B(1, n)$ is an
infinite cyclic group; in this case, every orientation-preserving action of $B(1, n)$ on $\mathbb R$ has a wandering interval.
Supppose $n=-1$. Let $\phi: B(1, -1)\rightarrow {\rm Homeo}_+(\mathbb R)$ be any orientation-preserving action of $B(1, -1)$ on $\mathbb R$. Let $g=\phi(a)$ and $f=\phi(b)$; then $fg=g^{-1}f$. We wish to show that $\phi$ has a wandering interval.
If ${\rm Fix}(g)=\mathbb R$, then $\phi$ factors through a
cyclic group action on $\mathbb R$, which ensures the existence of wandering intervals. So, we may suppose that
${\rm Fix}(g)\not=\mathbb R$.

{\bf Claim A.} $f({\rm Fix}(g))={\rm Fix}(g)$. In fact, let $x\in {\rm Fix}(g)$. Then $g^{-1}f(x)=fg(x)=f(x)$.
So, $f(x)\in {\rm Fix}(g^{-1})={\rm Fix}(g)$. On the other hand, $g^{-1}f^{-1}(x)=f^{-1}g(x)=f^{-1}(x)$, which
means $f^{-1}(x)\in {\rm Fix}(g^{-1})={\rm Fix}(g)$.

From Claim A, we see that $f$ permutes the maximal open intervals in $\mathbb R\setminus {\rm Fix}(g)$.
Fix a maximal open interval $(u, v)$ in $\mathbb R\setminus {\rm Fix}(g)$ ($u$ may be $-\infty$, and $v$ may be $+\infty$).

{\bf Claim B.} $f((u, v))\cap (u, v)=\emptyset.$ Otherwise, $f((u, v))=(u, v)$. Without loss of generality, we suppose $g(x)>x$ for every $x\in (u, v)$.
Fix any point $w\in (u, v)$, then $v=\lim\limits_{i\to+\infty}fg^{i}(w)=\lim\limits_{i\to+\infty}g^{-i}(f(w))=u$ by the relation $fg=g^{-1}f$.
This is a contradiction.

From Claim A and Claim B, we immediately get

{\bf Claim C.} Suppose $f^{m_1}g^{n_1}f^{m_2}g^{n_2}...f^{m_l}g^{m_l}((u, v))=(u,v)$ for some integers $l$, $m_i$, and $n_i$ $(1\leq i\leq l)$. Then
$m_1+m_2+...+m_l=0$.

Take an open interval $J\subset (u, v)$ such that $J\cap g(J)=\emptyset$.

{\bf Claim D.} If $h(J)\cap J\not=\emptyset$ for some $h\in \phi(B(1, -1))$, then $h={\rm Id}_{\mathbb R}$. In fact, suppose $h=f^{m_1}g^{n_1}f^{m_2}g^{n_2}...f^{m_l}g^{m_l}$
such that $|m_1|+|n_1|+|m_2|+|n_2|+...+|m_l|+|n_l|$ attains minimum among all expressions of $h$ by $f$ and $g$. This implies that
all $m_i$ with $m_i\not=0$ have the same signs by the relation $f^{-1}gf=g^{-1}$. However, this forces all $m_i=0$ by Claim C (noting that $h((u, v))=(u, v)$).
Then $h={\rm Id}_{\mathbb R}$, since $J\cap g(J)=\emptyset$.

It follows from Claim D that $J$ is a wandering interval for $\phi$.

$(\Leftarrow)$ Since $B(1, 1)$ is isomorphic to $\mathbb Z^2$, it is of transitive type by Example 1.2. If $n>1$, then $B(1, n)$ is of transitive type
by Example 1.3 (note that $T$ and $S$ in Example 1.3 satisfy the relation $ST=T^nS$). In Example 1.4, we see that $f$ and $g$ satisfy
the relation $fg=g^{-k}f$ with $k\geq 2$, which implies that $B(1, n)$ is of transitive type when $n\leq -2$.

\end{proof}

From example 1.1, we see that the free nonabelian group $\mathbb Z*\mathbb Z$ has a topologically transitive action on $\mathbb R$
by orientation-preserving homeomorphisms, since $\mathbb Z^2$ is a factor of $\mathbb Z*\mathbb Z$. In fact, we can further require the action to be faithful as the following lemma shows.

\begin{prop}
The free nonabelian group $\mathbb Z*\mathbb Z$ has a faithful topologically transitive action on $\mathbb R$
by orientation-preserving homeomorphisms.
\end{prop}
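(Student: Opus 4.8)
The plan is to upgrade the (non-faithful) transitive action through the abelianization to a faithful one by producing a \emph{dense} left-invariant order on $G:=\mathbb Z*\mathbb Z$ and realizing it dynamically with a dense basepoint orbit. First I would isolate the following soft reduction: if a countable group $G$ carries a left-invariant total order $\preceq$ that is dense (between any two elements there is a third), then $G$ has a faithful topologically transitive orientation-preserving action on $\mathbb R$. Indeed, since $G$ is infinite, $(G,\preceq)$ has neither a largest nor a smallest element, so by Cantor's isomorphism theorem it is order-isomorphic to $\mathbb Q$; composing such an isomorphism with the inclusion $\mathbb Q\hookrightarrow\mathbb R$ gives an order-embedding $t\colon G\to\mathbb R$ whose image is dense and unbounded on both sides. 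For each $g\in G$ the assignment $t(h)\mapsto t(gh)$ is an order-automorphism of the dense set $t(G)$, and every order-automorphism of a dense unbounded subset of $\mathbb R$ extends uniquely to an element $\phi(g)\in{\rm Homeo}_+(\mathbb R)$. A direct check shows $\phi$ is a homomorphism; it is faithful because $t(gh)=t(h)$ for all $h$ forces $g=e$, and it is topologically transitive because the orbit $\phi(G)\,t(e)=t(G)$ is dense in $\mathbb R$.

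The substance of the argument is therefore to equip $\mathbb Z*\mathbb Z$ with a dense left order. For this I would use the lower central series $G=\gamma_1\rhd\gamma_2\rhd\cdots$, where $\gamma_{i+1}=[\gamma_i,G]$, together with two classical facts about free groups: each quotient $\gamma_i/\gamma_{i+1}$ is a nontrivial free abelian group, and $G$ is residually nilpotent, that is $\bigcap_i\gamma_i=\{e\}$. Fixing an arbitrary order on each abelian quotient $\gamma_i/\gamma_{i+1}$, I would order $G$ lexicographically along this normal chain: for $g\neq e$ there is, because $\bigcap_i\gamma_i=\{e\}$, a unique index $d(g)$ with $g\in\gamma_{d(g)}\setminus\gamma_{d(g)+1}$, and I declare $g\succ e$ when the image of $g$ in $\gamma_{d(g)}/\gamma_{d(g)+1}$ is positive. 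The resulting positive cone is closed under multiplication (when the leading indices differ the smaller one dominates, and when they agree the two positive images add to a positive image), so $\preceq$ is a genuine left order.

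The step I expect to be the crux is checking that this order is dense, i.e.\ that its positive cone has no least element. Given $u\succ e$, set $d=d(u)$ and pick any positive element $v$ of the deeper subgroup $\gamma_{d+1}$. Then $v^{-1}u\equiv u\pmod{\gamma_{d+1}}$ has the same (positive) image as $u$ in $\gamma_d/\gamma_{d+1}$, so $v^{-1}u\succ e$ and hence $e\prec v\prec u$. Such a $v$ exists precisely because $\gamma_{d+1}\neq\{e\}$, which is the point where the nonvanishing of the lower central quotients of a free group is used. Thus no positive element is minimal, $\preceq$ is dense, and feeding it into the realization of the first paragraph yields a faithful topologically transitive action of $\mathbb Z*\mathbb Z$ on $\mathbb R$.

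It is worth noting why the easy transitive action does not already work and why an apparently natural fix loops back on itself: the action of Example 1.2 pulled back along the abelianization $\mathbb Z*\mathbb Z\to\mathbb Z^2$ is transitive but has kernel $[G,G]$, and one could try to blow up a dense orbit and insert a faithful transitive action of the point-stabilizer in the inserted intervals---however that stabilizer is again a free group of infinite rank, so this route reduces the problem to an instance of itself. The order-theoretic construction sidesteps the loop by extracting density directly from residual nilpotence.
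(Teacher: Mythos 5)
Your proof is correct, and it takes a genuinely different route from the paper's. The paper argues concretely: it sets $f(x)=x+1$, $g(x)=x^3$, shows the group $H=\langle f,g\rangle$ acts topologically transitively by an expansion argument (a suitable word in $f,g$ stretches any interval to diameter $>1$, after which a translation meets $V$), and then gets faithfulness in one stroke by citing White's theorem \cite{Wh} that $H$ is free of rank $2$. You instead build a \emph{dense} left order on $\mathbb Z*\mathbb Z$ from the lower central series, using the classical Magnus facts that each $\gamma_i/\gamma_{i+1}$ is nontrivial free abelian and that $\bigcap_i\gamma_i=\{e\}$, and then realize the order dynamically: Cantor's theorem identifies $(G,\preceq)$ with $(\mathbb Q,\leq)$, left translations extend uniquely to elements of ${\rm Homeo}_+(\mathbb R)$, the dense orbit $t(G)$ gives transitivity, and injectivity of $t$ gives faithfulness. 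Your positive-cone verification and the density argument (inserting a positive element of $\gamma_{d(u)+1}$ below $u$, which exists because the series never terminates) are both sound; note that your order is in fact bi-invariant, since conjugation acts trivially on lower central quotients. The trade-off: the paper's proof is short and explicit but rests on White's nontrivial theorem, whereas yours replaces it by standard facts about free groups plus the dynamical realization, and it isolates a reusable general lemma --- every countable group carrying a dense left-invariant order admits a faithful topologically transitive orientation-preserving action on $\mathbb R$ --- which fits naturally with the orderability theme of Section 2. One minor slip, harmless to the argument: the reason $(G,\preceq)$ has no greatest or least element is not that $G$ is infinite (an infinite total order may well have endpoints) but that a nontrivial left-ordered group never does, since $e\prec h$ implies $g\prec gh$ for all $g$.
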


\begin{proof}
Let $f, g\in {\rm Homeo}_+(\mathbb R)$ be defined by $f(x)=x+1$ and $g(x)=x^3$ for every $x\in \mathbb R$.  Then for
any nonempty open intervals $U$ and $V$ in $\mathbb R$, we have ${\rm diam}(g^n(U))>1$ for some integer $n$, and then
there is some integer $m$ such that $f^m(g^n(U))\cap V\not=\emptyset$. Thus the action by the group $H$ generated by $f$ and $g$
is topologically transitive. By the main result in \cite{Wh}, we see that $H$ is isomorphic to $\mathbb Z*\mathbb Z$.
\end{proof}

The following theorem is due to D. Witte-Morris (see \cite{Wi2}).
\begin{thm}[Witte-Morris]
The group $SL(2, \mathbb Z)$ and all finite index subgroups of $SL(n,\mathbb Z)$ with $n\geq 3$ are non-orderable.
\end{thm}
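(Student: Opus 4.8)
The plan is to separate two regimes, since the obstruction is elementary for $SL(2,\mathbb Z)$ but deep for the higher rank groups. Throughout, ``orderable'' means left-orderable, and I will use two standard facts: first, by the correspondence quoted from \cite{Na2}, a countable group $\Gamma$ is left-orderable if and only if it admits a \emph{faithful} orientation-preserving action on $\mathbb R$; second, every left-orderable group is torsion free, because $g\succ e$ forces $g^{n}\succ e$ for all $n\ge 1$ by left-invariance (and symmetrically if $g\prec e$), so no nontrivial element has finite order. The case of $SL(2,\mathbb Z)$ is then immediate: it contains an element of order $4$ (a rotation-type matrix whose square is $-I$), hence has torsion and cannot be left-ordered.

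For a finite index subgroup $\Gamma\le SL(n,\mathbb Z)$ with $n\ge 3$ this elementary route is closed off, since such $\Gamma$ may be torsion free (e.g.\ deep enough congruence subgroups). Here I would argue by contradiction through the dynamical correspondence: assuming $\Gamma$ orderable, fix a faithful $\phi:\Gamma\to{\rm Homeo}_+(\mathbb R)$ and aim to produce a nontrivial element of $\ker\phi$. The ingredients I would assemble are the rich unipotent and solvable structure of $\Gamma$ together with Margulis' normal subgroup theorem, which guarantees that $\ker\phi$ is either finite or of finite index; since $\Gamma$ is infinite, it suffices to show that $\ker\phi$ cannot be finite, i.e.\ that $\phi$ cannot be faithful.

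To exploit the arithmetic, I would locate inside $\Gamma$ the standard distorted elements: passing to a finite index Heisenberg subgroup $H=\langle u,v\rangle$ with central generator $z=[u,v]$ and $[u,z]=[v,z]=e$, the identity $[u^{m},v^{m}]=z^{m^{2}}$ shows $z^{m^{2}}$ is a product of at most $4m$ generators, so $z$ is quadratically distorted; and inside a copy of $\mathbb Z^{2}\rtimes SL(2,\mathbb Z)$ the normal subgroup $\mathbb Z^{2}$ is exponentially distorted, since a hyperbolic element of $SL(2,\mathbb Z)$ conjugates short words to lattice translations of exponentially large norm. The plan is to use these distortions, together with the interlocking commutator relations among the many conjugate solvable subgroups, to force $\phi$ to be trivial on a finite index subgroup, so that $\ker\phi$ is of finite index and $\phi$ is not faithful, contradicting orderability.

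The hard part is precisely this last, global step, and it is the genuine content of Witte-Morris' theorem. One cannot hope for a purely local shortcut such as ``distortion implies a fixed point'': \emph{every} left-orderable group admits a faithful orientation-preserving action on $\mathbb R$ (its dynamical realization), in which distorted elements may act without fixed points, so distortion by itself places no constraint on a single $C^{0}$ action. The obstruction must therefore be the simultaneous incompatibility of \emph{all} the relations of $\Gamma$ with one left-order: individually, the solvable pieces such as $\mathbb Z^{2}\rtimes SL(2,\mathbb Z)$ or the Heisenberg group are themselves orderable, and only their prescribed overlap inside the higher rank lattice is not. Organizing the fixed-point and sign constraints coming from these overlapping subgroups, and propagating them with the normal subgroup theorem until a finite index subgroup is annihilated, is the delicate technical heart, and is where I expect essentially all of the difficulty to lie.
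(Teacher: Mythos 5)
Your first half is fine: the torsion obstruction for $SL(2,\mathbb Z)$ is correct and complete. Left-orderable groups are torsion free, and $\left(\begin{smallmatrix}0&-1\\1&0\end{smallmatrix}\right)$ has order $4$, so $SL(2,\mathbb Z)$ cannot be orderable. (Note that the paper itself does not prove this theorem at all; it quotes it from Witte-Morris \cite{Wi2}, so there is no internal proof to match against, and for $SL(2,\mathbb Z)$ your elementary argument is exactly the standard one.)

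The higher rank case, however, is a genuine gap, and you say so yourself. What you offer for a finite index subgroup $\Gamma\le SL(n,\mathbb Z)$, $n\ge 3$, is a reduction (via Margulis' normal subgroup theorem) to showing that no orientation-preserving action of $\Gamma$ on $\mathbb R$ is faithful, followed by an inventory of ingredients -- distorted Heisenberg elements, copies of $\mathbb Z^2\rtimes SL(2,\mathbb Z)$, ``interlocking commutator relations'' -- and then the admission that assembling these into a contradiction ``is where I expect essentially all of the difficulty to lie.'' That assembly \emph{is} the theorem; everything before it is setup. Moreover, as you correctly observe, distortion alone cannot work in the $C^0$ setting, so the missing step is not a routine verification but the actual mechanism of Witte-Morris' proof, which runs differently from your sketch: it works directly with the combinatorics of a left-order (equivalently, the order structure of an action on $\mathbb R$) restricted to the subgroup generated by elementary unipotent matrices, and shows that the specific commutation relations among root subgroups for $n\ge 3$ are incompatible with any left-invariant order -- no appeal to distortion, and the normal subgroup theorem enters only to upgrade conclusions, not as the engine. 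As it stands, your text for $n\ge 3$ is a research plan plus a citation of the difficulty, not a proof; if the intent is to use the result, the honest move is what the paper does, namely cite \cite{Wi2}.
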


\begin{prop}
The group $SL(2, \mathbb Z)$ and all finite index subgroups of $SL(n,\mathbb Z)$ with $n\geq 3$ are of wandering type.
\end{prop}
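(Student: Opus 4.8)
\emph{Proof proposal.} The plan is to combine the dichotomy of Theorem 2.1 with the non-orderability furnished by Theorem 4.7, the bridge between the two being the following observation: \emph{if a countable group $G$ is of transitive type, then $G$ admits a nontrivial orderable quotient.} To see this, let $\phi:G\to{\rm Homeo}_+(\mathbb R)$ be a topologically transitive orientation-preserving action and set $K={\rm Ker}(\phi)$. Then $G/K\cong\phi(G)$ is countable and acts faithfully, orientation-preservingly, and topologically transitively on $\mathbb R$; since a transitive action on $\mathbb R$ is nontrivial, $G/K\not=\{e\}$. By {\cite[Prop. 2.1]{Na2}} a countable group carrying a faithful orientation-preserving action on $\mathbb R$ is orderable, so $G/K$ is a nontrivial orderable quotient of $G$. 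By the contrapositive together with Theorem 2.1, it therefore suffices to show that $SL(2,\mathbb Z)$ and each finite index subgroup of $SL(n,\mathbb Z)$ ($n\geq 3$) has \emph{no} nontrivial orderable quotient.

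Next I would check this property for the two families. For $SL(2,\mathbb Z)$, recall that it is generated by two elements of finite order, for instance $\left(\begin{smallmatrix}0&-1\\1&0\end{smallmatrix}\right)$ and $\left(\begin{smallmatrix}0&-1\\1&1\end{smallmatrix}\right)$, of orders $4$ and $6$ (the latter being the product of the standard generators $S$ and $T$, so that the two do generate). Since an orderable group is torsion-free, any homomorphism from $SL(2,\mathbb Z)$ to an orderable group must kill both generators and hence is trivial; thus every orderable quotient of $SL(2,\mathbb Z)$ is trivial. For a finite index subgroup $\Gamma$ of $SL(n,\mathbb Z)$ with $n\geq 3$, suppose $\Gamma/K$ were a nontrivial orderable quotient. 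As $\Gamma$ is an irreducible lattice in the higher rank simple Lie group $SL(n,\mathbb R)$, Margulis' normal subgroup theorem forces $K$ to be either of finite index or finite and central. If $[\Gamma:K]<\infty$, then $\Gamma/K$ is a nontrivial finite group, which has torsion and so is not orderable; if $K$ is finite and central, then $\Gamma/K$ is itself a higher rank lattice, non-orderable by Theorem 4.7 (literally so when $K=\{e\}$). Either way we reach a contradiction, completing the argument.

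The step I expect to be the genuine obstacle is the last case above: ruling out orderability of a central quotient $\Gamma/K$ with $K$ a nontrivial finite central subgroup. This can occur only when $-I\in\Gamma$, in which case $K=\{\pm I\}$ and $\Gamma/K$ is a finite index subgroup of $PSL(n,\mathbb Z)$, which is not literally a subgroup of $SL(n,\mathbb Z)$ covered by the statement of Theorem 4.7. When $n$ is odd this difficulty is absent, since $-I\notin SL(n,\mathbb Z)$ forces $\Gamma$ to be centerless and hence $K=\{e\}$, reducing matters directly to Theorem 4.7. In the remaining even case one must verify that the Witte--Morris non-orderability argument is insensitive to passing to a finite central quotient, i.e. that these central quotients, being themselves higher rank lattices, are again non-orderable; this is where I would expect to invest the real work.
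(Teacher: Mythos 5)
Your reduction (transitive type implies a nontrivial countable orderable quotient, via the faithfulness of the induced action of $G/{\rm Ker}(\phi)$) and your treatment of $SL(2,\mathbb Z)$ are both correct, and your use of Margulis' normal subgroup theorem parallels the paper's use of Theorem 6.5. But the case you flag at the end is a genuine gap, not a routine verification: when $n$ is even and $-I\in\Gamma$, the Margulis alternative leaves you facing a possibly orderable quotient $\Gamma/\{\pm I\}$, which is a finite index subgroup of $PSL(n,\mathbb Z)$ and hence not covered by Theorem 4.7 as stated. Nor can you dismiss this case by declaring that central quotients, "being themselves higher rank lattices, are again non-orderable": the paper itself observes that non-orderability of \emph{all} higher rank lattices is equivalent to the still-open one-dimensional Zimmer conjecture, so this step requires an actual argument, which your proposal defers rather than supplies.

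The missing idea --- and the one the paper uses --- is Selberg's Lemma (Theorem 6.6): $H$ contains a torsion-free normal subgroup $F$ of finite index. The paper works with subgroups rather than quotients: given a topologically transitive action $\phi$ of $H$, Theorem 6.5 forces ${\rm Ker}(\phi)$ to be finite (a cofinite kernel would give a finite, hence trivial, image in ${\rm Homeo}_+(\mathbb R)$, contradicting transitivity), so ${\rm Ker}(\phi)\cap F=\{e\}$ by torsion-freeness, $\phi|_F$ is faithful, and $F$ is orderable by the orderability criterion for countable groups acting faithfully on $\mathbb R$ ({\cite[Prop.~2.1]{Na2}}) --- contradicting Theorem 4.7, since $F$ still has finite index in $SL(n,\mathbb Z)$. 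The same device repairs your quotient formulation: if $\Gamma/K$ were orderable with $K$ finite, then $F\cap K=\{e\}$ for a Selberg subgroup $F\leq\Gamma$, so $F$ embeds into the orderable group $\Gamma/K$ and is therefore itself orderable, yielding the same contradiction. In short, passing to a torsion-free finite-index subgroup sidesteps the $PSL$ issue entirely; without it, your proof is incomplete precisely at the point you identified.
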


\begin{proof}
Since $SL(2, \mathbb Z)$ is generated by elements with finite orders, any orientation preserving action of $SL(2, \mathbb Z)$
on $\mathbb R$ must be trivial. So,  $SL(2, \mathbb Z)$ is of wandering type. Suppose $n\geq 3$ and $H$ is a subgroup of $SL(n,\mathbb Z)$
with finite index. Assume $H$ is of transitive type and let $\phi:H\rightarrow {\rm Homeo}_+(\mathbb R)$ be a topologically transitive action
of $H$ on $\mathbb R$. By Selberg's Lemma (see Theorem 6.6 in the appendix), there is a torsion free normal subgroup $F$ of $H$ which has finite index in $H$. It follows
from Theorem 6.5 (see the appendix) and the topological transitivity of $\phi$ that ${\rm Ker}(\phi)$ is  finite,
which implies that ${\rm Ker}(\phi)\cap F$ is trivial. So the restriction $\phi|_F:F\rightarrow {\rm Homeo}_+(\mathbb R)$ is injective.
Thus $F$ is orderable, which contradicts Theorem 4.7.
\end{proof}

\begin{rem}
Since the free non-abelian group $\mathbb Z*\mathbb Z$ is a finite index subgroup of $SL(2, \mathbb Z)$ and $\mathbb Z*\mathbb Z$
is of transitive type, Proposition 4.8 does not hold for finite index subgroups of $SL(2, \mathbb Z)$.
\end{rem}

Then Theorem 2.3 follows from all the propositions in this section.

\section{Indicability}

To prove Theorem 2.4, we need several well-known results  about group actions on $\mathbb R$.
The following lemma can be shown by the dynamical realization method (see \cite[Theorem 2.2.19]{Na1} and its remark).

\begin{lem}
Every countable nontrivial orderable group has a faithful orientation-preserving action on the line $\mathbb R$ without fixed points.
\end{lem}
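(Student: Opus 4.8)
The plan is to use the dynamical realization of a left-invariant order. Since $G$ is orderable, I would fix a left-invariant total order $\prec$ on $G$ together with an enumeration $G=\{h_0=e, h_1, h_2, \dots\}$. First I would build an order-preserving injection $t\colon G\to\mathbb R$ inductively: set $t(h_0)=0$, and having placed $t(h_0), \dots, t(h_n)$, place $t(h_{n+1})$ at $\max_{i\le n}t(h_i)+1$ if $h_{n+1}$ exceeds all of $h_0, \dots, h_n$ in $\prec$, at $\min_{i\le n}t(h_i)-1$ if it lies below all of them, and at the midpoint of the images of its two $\prec$-neighbours among $h_0, \dots, h_n$ otherwise. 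By construction $h\mapsto t(h)$ is strictly order-preserving, so $t(G)$ is an order-isomorphic copy of $(G, \prec)$ sitting inside $\mathbb R$.

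Next I would convert left multiplication into an action on $\mathbb R$. For each $g\in G$ the left translation $h\mapsto gh$ is order-preserving, by left-invariance of $\prec$, so transporting it through $t$ yields a strictly increasing bijection $\sigma_g\colon t(G)\to t(G)$ with $\sigma_g(t(h))=t(gh)$. This extends by one-sided limits to a monotone bijection of the closure $\overline{t(G)}$, and I would extend it across each complementary (gap) interval of $\mathbb R\setminus\overline{t(G)}$ by the unique increasing affine homeomorphism onto the image gap, obtaining $\phi(g)\in{\rm Homeo}_+(\mathbb R)$. That $\phi$ is a homomorphism is then routine: $\phi(g_1g_2)$ and $\phi(g_1)\phi(g_2)$ agree on $t(G)$ because $t((g_1g_2)h)=t(g_1(g_2h))$, hence agree on $\overline{t(G)}$ by continuity and on each gap because both are the affine map determined by the same endpoints.

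Faithfulness is immediate, since $\phi(g)={\rm Id}_{\mathbb R}$ forces $t(g)=\sigma_g(t(e))=t(e)$, whence $g=e$. The substantive point, and the main obstacle, is the absence of a global fixed point: the naive realization could place a fixed point at $\sup t(G)$ or $\inf t(G)$ whenever the orbit is bounded. The remedy uses two algebraic facts about a nontrivial orderable group: it is torsion-free, and its order has neither a largest nor a smallest element (if $m$ were largest, then $m\succ e$, so $m^2\succ m$, a contradiction, and dually for smallest). Because there is no $\prec$-maximum, the enumeration introduces a new $\prec$-maximal element infinitely often, each time pushing $t$ up by $1$, so $\sup t(G)=+\infty$; dually $\inf t(G)=-\infty$. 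This already gives surjectivity of each $\phi(g)$, hence $\phi(g)\in{\rm Homeo}_+(\mathbb R)$. Finally, no point of $t(G)$ is globally fixed since $\phi(g)(t(h))=t(h)$ forces $gh=h$; and if some $p\in\mathbb R\setminus t(G)$ were fixed by all of $G$, then each increasing $\phi(g)$ would preserve $\{x\in t(G):x<p\}$, so the set $S=\{h\in G:t(h)<p\}$ would satisfy $gS=S$ for every $g$, forcing $S=\emptyset$ or $S=G$ and hence $t(G)$ bounded above or below by $p$, contrary to $\sup t(G)=+\infty=-\inf t(G)$. Thus ${\rm Fix}(G)=\emptyset$, which completes the construction.
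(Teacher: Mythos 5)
Your proof is the standard dynamical realization of a left-invariant order, which is exactly the approach the paper invokes --- the paper simply cites \cite{Na1} (Theorem 2.2.19 and its remark) rather than writing the construction out, and your observation that the absence of $\prec$-extremal elements forces $\sup t(G)=+\infty$ and $\inf t(G)=-\infty$ is precisely what makes the realization both well defined on all of $\mathbb R$ and free of global fixed points. The one step you pass over silently is that $\sigma_g$ genuinely extends continuously to $\overline{t(G)}$ (the left and right limits must agree at two-sided accumulation points, which can fail for an arbitrary order-preserving embedding of $(G,\prec)$ into $\mathbb R$); it does hold here because your midpoint rule halves the remaining gap every time a new element is inserted at a cut with no greatest element below and no least element above, so every such cut collapses to a single point of $\overline{t(G)}$ and the only gaps are those between $\prec$-consecutive group elements, which $\sigma_g$ permutes.
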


The following lemma is the combination of \cite[Proposition 2.1.12]{Na1} and the remarks after it (see also \cite{Ma}).

\begin{lem}
If $G$ is a finitely generated group acting on the line $\mathbb R$ by orientation-preserving homeomorphisms, then $G$ admits
a nonempty minimal closed subset $\Lambda$ of $\mathbb R$, and $\Lambda$ has four possibilities:

{\noindent\bf(a)} $\Lambda$ is a point (in this case, $\Lambda$ is a fixed point of $G$);

{\noindent\bf(b)} $\Lambda$ is an infinite sequence $(a_n)_{n\in\mathbb Z}$ satisfying $a_n<a_{n+1}$ for all $n$ and without accumulation points in $\mathbb R$;

{\noindent\bf(c)} $\Lambda$ is locally a Cantor set;

{\noindent\bf(d)} $\Lambda=\mathbb R$.

\end{lem}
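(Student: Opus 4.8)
The statement has two independent contents: the \emph{existence} of a nonempty minimal closed invariant set, and its \emph{classification} into the four listed types. The plan is to produce a minimal set by a Zorn's lemma argument and then to read off cases (a)--(d) from two successive dichotomies forced by minimality. The classification is elementary once existence is in hand; the existence step is where finite generation is essential, and I expect it to be the main obstacle.

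For existence I would order the family of nonempty closed $G$-invariant subsets of $\mathbb R$ by reverse inclusion and seek a maximal element, i.e.\ a minimal set. Since $G$ is finitely generated, a common fixed point of the generators is a global fixed point; if such a point exists it already furnishes a minimal set of type (a), so I may assume $\mathrm{Fix}(G)=\emptyset$. To invoke Zorn's lemma it suffices to show that every descending chain $\{F_\lambda\}$ of nonempty closed invariant sets has $\bigcap_\lambda F_\lambda\neq\emptyset$. The first reduction is that no $F_\lambda$ can be bounded above: if $s=\sup F_\lambda<+\infty$ then $s\in F_\lambda$ by closedness, and for every $g\in G$ one has $g(s)=\sup g(F_\lambda)=\sup F_\lambda=s$, because each $g$ is an increasing homeomorphism with $gF_\lambda=F_\lambda$; thus $s\in\mathrm{Fix}(G)$, contrary to assumption. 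Symmetrically no $F_\lambda$ is bounded below, so every member of the chain is unbounded in both directions. The remaining point is to prevent the $F_\lambda$ from escaping to infinity, so that the intersection is nonempty. This is exactly the step that uses finite generation of $G$ --- for arbitrary groups acting on the noncompact space $\mathbb R$ a minimal set need not exist --- and it is the crux; it is supplied by the finite-generation input of \cite[Prop.~2.1.12]{Na1}.

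Granting a nonempty minimal closed set $\Lambda$, let $\Lambda'$ be its set of accumulation points. Because homeomorphisms preserve accumulation points and $\Lambda$ is invariant, $\Lambda'$ is a closed $G$-invariant subset of $\Lambda$, so minimality gives $\Lambda'=\emptyset$ or $\Lambda'=\Lambda$. If $\Lambda'=\emptyset$ then $\Lambda$ is closed and discrete, hence has no accumulation point in $\mathbb R$. A finite invariant set has a largest element, which is fixed since $G$ preserves the order; by minimality $\Lambda$ is then a single point --- case (a). An infinite discrete minimal set must be unbounded on both sides, since a finite supremum or infimum would again be a global fixed point and force $\Lambda$ to be a singleton; hence it can be written as an increasing bi-infinite sequence $(a_n)_{n\in\mathbb Z}$ with $a_n\to\pm\infty$ --- case (b).

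If instead $\Lambda'=\Lambda$, then $\Lambda$ is perfect, and I would run a second dichotomy on its interior. The set $\mathrm{int}(\Lambda)$ is open and $G$-invariant, so $\partial\Lambda=\Lambda\setminus\mathrm{int}(\Lambda)$ is closed and $G$-invariant; by minimality $\partial\Lambda=\emptyset$ or $\partial\Lambda=\Lambda$. If $\partial\Lambda=\emptyset$ then $\Lambda$ is clopen in $\mathbb R$, so $\Lambda=\mathbb R$ by connectedness --- case (d). If $\partial\Lambda=\Lambda$ then $\mathrm{int}(\Lambda)=\emptyset$, so $\Lambda$ is a perfect, closed, nowhere dense subset of $\mathbb R$; such a set contains no nondegenerate interval and is therefore totally disconnected as well as perfect, hence locally homeomorphic to the Cantor set --- case (c). This exhausts the possibilities and completes the plan.
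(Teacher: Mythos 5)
The first thing to note is that the paper does not prove this lemma at all: it is quoted as known, namely as ``the combination of \cite[Proposition 2.1.12]{Na1} and the remarks after it (see also \cite{Ma})''. So there is no paper proof to compare against; your proposal is in effect a reconstruction of the standard argument behind that citation. Your classification half is complete and correct: the two dichotomies (derived set $\Lambda'\in\{\emptyset,\Lambda\}$, then $\partial\Lambda\in\{\emptyset,\Lambda\}$) are exactly the right closed invariant subsets to feed into minimality; the finite discrete case collapses to a fixed point, the infinite discrete case is forced to be a bi-infinite increasing sequence without accumulation points, and the perfect nowhere dense case is locally a Cantor set (with the harmless caveat that one should choose the endpoints of the compact neighborhood outside $\Lambda$, so that the intersection $[x-\epsilon,x+\epsilon]\cap\Lambda$ is itself perfect; this is routine).

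The existence half, however, is not a proof as written: at the single step where finite generation enters --- preventing the chain $\{F_\lambda\}$ from escaping to infinity --- you appeal to \cite[Prop. 2.1.12]{Na1}, which \emph{is} the existence statement being proven, so the appeal is circular. The missing argument is short and you should supply it. Assume ${\rm Fix}(G)=\emptyset$ and let $g_1,\dots,g_k$ generate $G$. Put $\theta(x)=\max_i \max\{g_i(x),g_i^{-1}(x)\}$; then $\theta$ is continuous, increasing (a maximum of increasing homeomorphisms), and $\theta(x)>x$ for every $x$, since some generator moves $x$ and if $g_i(x)<x$ then $g_i^{-1}(x)>x$. Fix $x_0$ and set $I=[x_0,\theta(x_0)]$. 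Every nonempty closed invariant set $F$ meets $I$: pick $y\in F$; the same supremum argument you used for the $F_\lambda$ shows $Gy$ is unbounded in both directions, so there are $u,v\in Gy$ with $u<x_0<\theta(x_0)<v$. Write $v=h_n\cdots h_1(u)$ with each $h_j\in\{g_i^{\pm 1}\}$ and set $w_j=h_j\cdots h_1(u)$. Taking the smallest $j$ with $w_j\geq x_0$, we have $w_{j-1}<x_0$, hence $w_j=h_j(w_{j-1})\leq\theta(w_{j-1})\leq\theta(x_0)$, so $w_j\in Gy\cap I\subset F\cap I$. Now in your chain argument the sets $F_\lambda\cap I$ are nested nonempty compacts, so $\bigcap_\lambda F_\lambda\supset\bigcap_\lambda (F_\lambda\cap I)\neq\emptyset$, and Zorn's lemma produces the minimal set. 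With this inserted, your proof is complete and self-contained, which is more than the paper itself provides.
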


{\noindent\it Proof of Proposition 2.5.} Suppose $G$ is a finitely generated nontrivial orderable group of wandering type. From Lemma 5.1, we can fix a
faithful action $\phi:G\rightarrow {\rm Homeo}_+(\mathbb R)$ without fixed points. By Lemma 5.2, there is a minimal set $\Lambda\subset\mathbb R$. Since
$\phi$ has no fixed points, $\Lambda$ cannot be a single point. By Definition 2.2 and Lemma 3.1, we see that $\Lambda$ is countable, which together with
Lemma 5.2 implies that $\Lambda$ is an infinite sequence $(a_n)_{n\in\mathbb Z}$ satisfying $a_n<a_{n+1}$ for all $n$ and without accumulation points in $\mathbb R$. Set $H=\{g\in G: g((a_0, a_1))=(a_0, a_1)\}$ and fix an $f\in G$ with $f(a_0)=a_1$. By the structure of $\Lambda$,
we have $H=\{g\in G: g(a_n)=a_n\ \mbox{for all}\ n\}$ and $f(a_n)=a_{n+1}$ for all $n$. Thus $H$ is normal in $G$, and $G/H=\{f^nH, n\in\mathbb Z\}$ which
is an infinite cyclic group. This completes the proof. \hfill{$\Box$}

\section{Appendix}

In this section, we first supply some basic results about poly-cyclic groups, which have been used in the precious sections.
One may consult \cite{Seg} for more details.

\begin{prop}
Let $G$ be a poly-cyclic group and let $H$ be a subgroup of $G$. Then $H$ is poly-cyclic.
\end{prop}

\begin{prop}[\cite{Rh}]
Let $G$ be a poly-cyclic group. Then $G$ is poly-infinite-cyclic if and only if $G$ is orderable.
\end{prop}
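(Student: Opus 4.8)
The plan is to prove the two implications separately, beginning with the routine direction. For ``poly-infinite-cyclic $\Rightarrow$ orderable'', I would induct on the length $k$ of a subnormal series $G=N_0\rhd N_1\rhd\cdots\rhd N_k=\{e\}$ with every factor $N_i/N_{i+1}\cong\mathbb Z$. The base case is $\mathbb Z$, which is orderable. The inductive step rests on the classical fact that left-orderability is closed under extensions: if $N\lhd H$ and both $N$ and $H/N$ are orderable with positive cones $P_N$ and $\overline P$, then $P=\{h\in H:hN\in\overline P\}\cup(P_N\setminus\{e\})$ is a positive cone making $H$ orderable (one checks directly that $P$ is a subsemigroup with $H=P\sqcup P^{-1}\sqcup\{e\}$, which is all that a left-order requires). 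Applying this to $N_{i+1}\lhd N_i$, where $N_i/N_{i+1}\cong\mathbb Z$ and $N_{i+1}$ is orderable by induction, one climbs from $N_{k-1}\cong\mathbb Z$ up to $N_0=G$.

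For the converse ``orderable $\Rightarrow$ poly-infinite-cyclic'', I would first record that an orderable group is torsion-free: in a left-order with $e<g$, left translation gives $e<g<g^2<\cdots$, so no nontrivial element has finite order. Thus it suffices to prove the purely group-theoretic statement that a torsion-free polycyclic group $G$ is poly-infinite-cyclic, and I would do this by induction on the Hirsch length $h(G)$, the number of infinite cyclic factors occurring in a polycyclic series (a well-defined invariant). If $h(G)=0$ then $G$ is finite, hence trivial. If $h(G)\ge 1$, the crucial step is to produce a surjection $\pi\colon G\twoheadrightarrow\mathbb Z$; granting this, $N:=\ker\pi$ is a subgroup of $G$, hence torsion-free and polycyclic by Proposition 6.1, with $h(N)=h(G)-1$, so by induction $N$ carries a subnormal series with infinite cyclic factors, and prefixing $G\rhd N$ (with $G/N\cong\mathbb Z$) produces a poly-infinite-cyclic series for $G$.

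The main obstacle is precisely this surjection onto $\mathbb Z$, i.e. the indicability of a nontrivial torsion-free polycyclic group. Torsion-freeness is genuinely needed here: $\mathbb Z^2\rtimes_{-I}(\mathbb Z/2)$ is an infinite polycyclic group whose abelianization $(\mathbb Z/2)^3$ is finite. To obtain the surjection I would pass to the Fitting subgroup $F=\mathrm{Fit}(G)$, which is nilpotent, nontrivial, and torsion-free, hence (as the paper already assumes for finitely generated torsion-free nilpotent groups) super-poly-infinite-cyclic and in particular has infinite abelianization. Using the standard polycyclic fact that $C_G(F)\subseteq F$, one controls the conjugation action of $G$ on $F$ and on $F^{\mathrm{ab}}$, and the goal is to push the infinite part of $F^{\mathrm{ab}}$ up to an infinite cyclic quotient of $G$ itself. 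This last passage --- from an infinite cyclic quotient of $F$ to one of $G$ --- is the delicate point I expect to require the most care; alternatively, since polycyclic groups are amenable, one may invoke the theorem that amenable left-orderable groups are locally indicable and apply it to the finitely generated group $G$ itself to obtain the surjection in one stroke.
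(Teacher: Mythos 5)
Your forward implication is fine: left-orderability is closed under extensions, your lexicographic cone is the standard one, and climbing the series from $N_{k-1}\cong\mathbb Z$ up to $G$ is correct. Note that the paper gives no proof of this proposition at all --- it is quoted from Rhemtulla \cite{Rh} --- so your attempt has to stand entirely on its own merits, and it is the converse direction where it fails.

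The gap is your opening reduction: ``it suffices to prove the purely group-theoretic statement that a torsion-free polycyclic group is poly-infinite-cyclic.'' That statement is false, so no elaboration of the Fitting-subgroup argument can ever close the gap. A counterexample is the fundamental group of the Hantzsche--Wendt flat $3$-manifold (Passman/Promislow's group) $\Gamma=\langle x,y \mid xy^2x^{-1}=y^{-2},\ yx^2y^{-1}=x^{-2}\rangle$: it is torsion-free (it is a Bieberbach group) and polycyclic (an extension of $\mathbb Z^3$, generated by $x^2$, $y^2$, $(xy)^2$, by the holonomy $(\mathbb Z/2)^2$), yet its abelianization is $\mathbb Z/4\oplus\mathbb Z/4$, which is finite. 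Hence $\Gamma$ is not indicable, and a fortiori not poly-infinite-cyclic, since a poly-infinite-cyclic group surjects onto its top factor $\mathbb Z$. (Indeed $\Gamma$ is the standard example of a torsion-free group that is \emph{not} left-orderable --- exactly the phenomenon Rhemtulla's theorem records: for polycyclic groups, orderability is strictly stronger than torsion-freeness, and your reduction throws away precisely the hypothesis that matters.) Your fallback, by contrast, contains the germ of a correct proof: by Witte-Morris \cite{Wi1}, every left-orderable amenable group is locally indicable, polycyclic groups are solvable hence amenable, and all their subgroups are finitely generated, polycyclic (Proposition 6.1 of the paper) and again orderable. So if you run your Hirsch-length induction with the hypothesis ``orderable and polycyclic'' --- the kernel of $G\twoheadrightarrow\mathbb Z$ inherits both properties --- the argument goes through; this is logically legitimate (Witte-Morris's proof does not rest on Rhemtulla's theorem), though it invokes a much later and deeper result than the one being proved. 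As written, however, with the induction carried by torsion-freeness alone, the proof is irreparably broken.
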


Since every subgroup of an orderable group is orderable, we immediately get the the following corollary by Proposition 6.1 and Proposition 6.2.

\begin{cor}
Let $G$ be a poly-infinite-cyclic group. Then every non-trivial subgroup of $G$ is poly-infinite-cyclic.
\end{cor}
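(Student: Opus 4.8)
The plan is to deduce the corollary directly from Proposition 6.1 and Proposition 6.2, using in addition the standard fact (already invoked in the text) that every subgroup of an orderable group inherits a left-invariant order and is therefore orderable. The argument is a short chain of implications, so I expect no genuine difficulty beyond keeping track of the non-triviality hypothesis.

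First I would note that a poly-infinite-cyclic group is in particular poly-cyclic, because each infinite cyclic quotient $N_i/N_{i+1}$ is, in particular, cyclic. Hence $G$ is poly-cyclic, and Proposition 6.1 applies to show that every subgroup $H \leq G$ is again poly-cyclic.

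Next, since $G$ is poly-infinite-cyclic, the forward direction of Proposition 6.2 tells me that $G$ is orderable. Restricting a left-invariant total order on $G$ to the subgroup $H$ produces a left-invariant total order on $H$, so $H$ is orderable as well. At this point $H$ is simultaneously poly-cyclic and orderable, which is exactly the input needed to re-apply Proposition 6.2.

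Finally, a second application of Proposition 6.2 to $H$ — this time in the direction \emph{orderable} $\Rightarrow$ \emph{poly-infinite-cyclic} — yields that $H$ is poly-infinite-cyclic, completing the proof. The only point requiring care, and the reason the hypothesis ``non-trivial'' appears, is that the trivial subgroup is orderable and poly-cyclic yet is \emph{not} poly-infinite-cyclic under the definition used here (its subnormal quotients are trivial, hence finite rather than infinite cyclic). Restricting to non-trivial $H$ is precisely what makes the final invocation of Proposition 6.2 legitimate, and it is the one subtlety I would flag rather than a real obstacle.
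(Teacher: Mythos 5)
Your proof is correct and is essentially the paper's own argument: the corollary is deduced exactly as you describe, from Proposition 6.1 (subgroups of poly-cyclic groups are poly-cyclic), Proposition 6.2 (poly-cyclic $+$ orderable $\Leftrightarrow$ poly-infinite-cyclic), and the fact that subgroups of orderable groups are orderable. Your remark about why non-triviality is needed is a correct reading of the convention as well.
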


Suppose $G$ is a poly-cyclic group.  Then $G=N_0\rhd N_1\rhd...\rhd N_k=\{e\}$ with $N_i/N_{i+1}$ cyclic for each $i$.
The cyclic groups $N_i/N_{i+1}$ are called the {\it cyclic factors}. The {\it Hirsch number} of $G$ is the number of infinite cyclic factors
among these $N_i/N_{i+1}$.  It is an invariant of polycyclic groups.

\begin{prop}
Let $G$ be a poly-infinite-cyclic group. Then there is a decreasing sequence of normal subgroups of $G$: $G=G_0\rhd G_1\rhd...\rhd G_l=\{e\}$ for some $l>0$, such that $G_i/G_{i+1}$ is a free abelian group of finite rank for each $i$.
\end{prop}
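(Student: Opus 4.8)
The plan is to prove the slightly stronger statement that $G$ admits a \emph{characteristic} series with free abelian factors; this strengthening is what makes an induction go through, since ``is a characteristic subgroup of'' is a transitive relation whereas ``is a normal subgroup of'' is not. I will induct on the Hirsch number $h(G)$, peeling off the largest free abelian quotient from the \emph{top} of $G$ at each step rather than splitting a subgroup off the bottom, and I may assume $G$ is nontrivial.

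First I would record three preliminary facts. Since $G$ is poly-infinite-cyclic it is polycyclic, hence solvable and finitely generated, and it satisfies the maximal condition on subgroups (see \cite{Seg}); in particular the abelianization $G/[G,G]$ is finitely generated. Next, $G$ is torsion-free: given a subnormal series $G=N_0\rhd N_1\rhd\dots\rhd N_k=\{e\}$ with $N_i/N_{i+1}\cong\mathbb Z$, a nontrivial element $g$ with $g^n=e$ would, for the largest $i$ with $g\in N_i$, have nontrivial image in $N_i/N_{i+1}\cong\mathbb Z$ killed by $n$, which is absurd. Finally, because $N_1\lhd N_0=G$ and $G/N_1\cong\mathbb Z$, the group $G$ surjects onto $\mathbb Z$, so $G/[G,G]$ is infinite.

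For the induction, if $G$ is abelian then, being finitely generated and torsion-free, it is free abelian of finite rank and $G\rhd\{e\}$ is the desired series. If $G$ is nonabelian, let $\tau$ be the (finite) torsion subgroup of $G/[G,G]$ and let $H$ be its preimage under $G\to G/[G,G]$. Then $H$ is characteristic in $G$, being the preimage of the characteristic subgroup $\tau$; it satisfies $H\supseteq[G,G]\neq\{e\}$; and $G/H\cong (G/[G,G])/\tau$ is free abelian of some rank $a\geq 1$, the inequality holding because $G/[G,G]$ is infinite. By Corollary 6.3 the subgroup $H$ is poly-infinite-cyclic, and by additivity of the Hirsch number $h(H)=h(G)-a<h(G)$. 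The induction hypothesis thus supplies a characteristic series $H=H_0\rhd H_1\rhd\dots\rhd H_m=\{e\}$ with each $H_j/H_{j+1}$ free abelian of finite rank.

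Finally I would assemble $G\rhd H=H_0\rhd H_1\rhd\dots\rhd H_m=\{e\}$. Its top factor $G/H$ is free abelian by construction, and each remaining factor is free abelian by the choice of the $H_j$. Each $H_j$ is characteristic in $H$, and $H$ is characteristic in $G$, so by transitivity of the characteristic relation each $H_j$ is characteristic in $G$, hence normal in $G$; the same holds trivially for $G$ and $H$. This produces the required decreasing sequence of normal subgroups with free abelian factors. I expect the crux to be exactly this normality bookkeeping: a naive bottom-up induction would only yield subgroups normal in a proper subgroup of $G$, so the essential moves are to upgrade the conclusion to a characteristic series (exploiting transitivity) and to split off the free abelian quotient from the top, with the indicability of $G$ guaranteeing $h(H)<h(G)$ so the induction terminates.
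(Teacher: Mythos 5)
Your proof is correct and takes essentially the same route as the paper: both arguments pass to the preimage of the torsion subgroup of the abelianization---a \emph{characteristic} subgroup with free abelian quotient of rank $\geq 1$---then recurse inside that subgroup (which is poly-infinite-cyclic by Corollary 6.3), using transitivity of the characteristic relation to get normality in $G$ and finiteness of the Hirsch number to terminate. The only cosmetic differences are that you phrase the iteration as an induction on the Hirsch number and explicitly verify torsion-freeness for the abelian base case, whereas the paper runs the same peeling step iteratively from the top.
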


\begin{proof}
Suppose $G=N_0\rhd N_1\rhd...\rhd N_k=\{e\}$ for some $k>0$, where $N_{i+1}$ is normal in $N_i$ and $N_i/N_{i+1}$ is infinitely cyclic for each $i\geq 0$.
Since $G/N_1$ is abelian, the commutator group $[G, G]\subset N_1$. So, $G/[G, G]$ is isomorphic to $\mathbb Z^d\times F$ for some $d>1$, where $F$ is a finite
abelian group. Let $\pi:G\rightarrow G/[G, G]$ be the quotient homomorphism and let $G_1=\pi^{-1}(F)$. Then $G_1$ is a character subgroup of $G$ and $G/G_1$
is isomorphic to $\mathbb Z^d$. Similarly, we can get a character subgroup $G_2$ of $G_1$ (which is also a character subgroup
of $G$) such that $G_1/G_2$ is a free abelian group of finite rank, since $G_1$ is still poly-infinite-cyclic by  Proposition 6.3. Going on in this way,
we get a sequence of character subgroups of $G: G=G_0\rhd G_1\rhd... \rhd G_i\rhd...$ such that $G_i/G_{i+1}$ is a free abelian group of finite rank for each $i$. Since the Hirsch number of $G$ is finite,  there exists a positive integer $l$ such that $G_l=\{e\}$. Thus we complete the proof.
\end{proof}

The following theorem is due to Margulis and Kazhdan (see e.g. \cite[Theorem 8.1.2]{Zi}).

\begin{thm}
Let $\Gamma$ be a higher rank lattice and let $H$ be a normal subgroup of $\Gamma$. Then either $H$ is finite or $\Gamma/H$ is finite.
\end{thm}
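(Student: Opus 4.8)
The statement is the Margulis--Kazhdan normal subgroup theorem, and the plan is to follow Margulis's classical argument, whose two halves are the assertions that $\Gamma/H$ has Kazhdan's property (T) and that $\Gamma/H$ is amenable; a countable discrete group enjoying both is compact and hence finite. Write $G$ for the ambient semisimple Lie group (finite center, real rank $\ge 2$) in which $\Gamma$ sits as a lattice. If $H$ is finite we are in the first alternative, so the whole task reduces to proving that \emph{$H$ infinite forces $\Gamma/H$ to be finite}.

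The property (T) input is soft. Since $G$ has real rank at least two, Kazhdan's theorem gives property (T) for $G$; property (T) is inherited by lattices and by quotient groups, so $\Gamma$ and then $\Gamma/H$ have property (T) for every normal $H$. Granting the amenability of $\Gamma/H$, a group with property (T) is finite as soon as it is amenable (an amenable group with property (T) is compact, hence, being countable and discrete, finite), which would complete the proof.

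The substance, and the step I expect to be the main obstacle, is to show that \emph{$\Gamma/H$ is amenable when $H$ is infinite}; this is where boundary theory enters. First I would establish that $H$ is Zariski dense in $G$: its Zariski closure is normalized by $\Gamma$, which is Zariski dense by the Borel density theorem, so the closure is normal in $G$; simplicity of $G$ then forces it to be either finite or all of $G$, and an infinite $H$ gives the latter. Next, let $P\le G$ be a minimal parabolic and $B=G/P$ the Furstenberg boundary with its $\Gamma$-quasi-invariant measure class; the action of $\Gamma$ on $B$ is amenable in Zimmer's sense because $P$ is amenable. Using the Zariski density of $H$ together with a Howe--Moore / Mautner type ergodicity argument, one shows that $H$ acts ergodically on $B$ (equivalently, that $L^\infty(B)^H$ is trivial; here Margulis's factor theorem, classifying the measurable $\Gamma$-factors of $B$ as the homogeneous spaces $G/Q$ with $Q\supseteq P$ parabolic, is used to rule out intermediate $H$-invariant factors). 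Finally, the combination of ``$\Gamma$ acts amenably on $B$'' with ``the normal subgroup $H$ acts ergodically on $B$'' upgrades, via the affine fixed-point formulation of amenable actions, to a genuine $\Gamma/H$-invariant mean, yielding amenability of $\Gamma/H$.

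Assembling the pieces: $\Gamma/H$ is amenable and has property (T), hence is finite, which is exactly the second alternative of the theorem. The routine parts are the permanence of property (T) and the final ``amenable $+$ (T) $\Rightarrow$ finite'' deduction; the hard, genuinely deep part is the amenability of the quotient, and within it the ergodicity of the infinite normal subgroup on the boundary together with Margulis's factor theorem.
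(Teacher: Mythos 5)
The paper does not prove this statement at all: it appears in the appendix as a quoted theorem of Margulis--Kazhdan, with a pointer to Zimmer's book (Theorem 8.1.2 there), and is then used as a black box in Corollary 2.5 and Proposition 4.8. So there is no in-paper proof to compare yours against; what you have written is an outline of the argument that the cited source itself gives. Your outline is the standard and correct one: property (T) passes from $G$ to the lattice $\Gamma$ and then to the quotient $\Gamma/H$; amenability of $\Gamma/H$ (when $H$ is infinite) comes from boundary theory; and a countable discrete group that is both amenable and Kazhdan is finite. One technical caveat: Howe--Moore/Mautner phenomena do not apply directly to $H$, which is a countable discrete subgroup of $G$ rather than a closed noncompact one, so that appeal is spurious. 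The correct mechanism for the ergodicity of $H$ on $B=G/P$ is the one you name in the same parenthesis: since $H$ is normal in $\Gamma$, the space of $H$-ergodic components of $B$ is a measurable $\Gamma$-factor of $B$, hence of the form $G/Q$ with $Q\supseteq P$ parabolic by Margulis's factor theorem; $H$ acts trivially on this factor, and Zariski density of the infinite normal subgroup $H$ (Borel density plus simplicity of $G$) forces $Q=G$, i.e.\ ergodicity. With that reading, your sketch faithfully reproduces the Margulis/Zimmer proof, resting, as it must, on two deep cited inputs (Kazhdan's theorem and the factor theorem) -- which is precisely why the authors of this paper cite the result rather than prove it.
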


The following theorem is known as Selberg's Lemma (see \cite{Sel}).
\begin{thm}
Let $G$ be a finitely generated subgroup of $GL(n, \mathbb C)$. Then $G$ contains a torsion free normal subgroup $H$ with finite index in $G$.
\end{thm}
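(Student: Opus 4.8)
The plan is to reconstruct the classical proof of Selberg's Lemma, which reduces the statement to a congruence condition in a finitely generated coefficient ring and then settles it by a Minkowski-type valuation argument. First I would exploit the finite generation of $G$: if $G$ is generated by $g_1,\dots,g_r$, then all matrix entries of the $g_j^{\pm 1}$ lie in a subring $A\subseteq\mathbb C$ that is finitely generated over $\mathbb Z$. Hence $A$ is a finitely generated integral domain of characteristic zero and $G\leq GL(n,A)$, so the whole problem becomes internal to $GL(n,A)$.

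The second step is to pass to a finite quotient. By the Nullstellensatz for finitely generated $\mathbb Z$-algebras, every maximal ideal $\mathfrak m$ of $A$ has finite residue field $A/\mathfrak m$, say of characteristic $p$. The reduction map $GL(n,A)\to GL(n,A/\mathfrak m)$ is a homomorphism into a finite group, so its kernel, the congruence subgroup $\Gamma(\mathfrak m)$, is a normal subgroup of finite index. Intersecting with $G$, the subgroup $H=G\cap\Gamma(\mathfrak m)=\ker\bigl(G\to GL(n,A/\mathfrak m)\bigr)$ is automatically normal in $G$ and of finite index. Thus the normality demanded by the statement comes for free, and the real content is to arrange that $H$ is torsion-free.

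The third and crucial step is to show that, for a suitable $\mathfrak m$, the group $\Gamma(\mathfrak m)$ has no nontrivial element of finite order. Here I would work with the $\mathfrak m$-adic valuation $v$ on the Noetherian local ring $A_\mathfrak m$, whose Krull intersection $\bigcap_i\mathfrak m^iA_\mathfrak m$ vanishes. Suppose $g=I+B\in\Gamma(\mathfrak m)$ has prime order $q$, with $B\neq 0$ and $r=v(B)\geq 1$ the minimal valuation of an entry of $B$. Expanding $g^q=I$ yields
\[
qB=-\sum_{i=2}^{q}\binom{q}{i}B^{i}.
\]
If $q\neq p$ then $q$ is a unit, so the left side has valuation exactly $r$ while every term on the right has valuation $\geq 2r>r$, a contradiction. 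If $q=p$ one compares valuations more carefully: for $2\leq i\leq p-1$ the factor $p\mid\binom{p}{i}$ forces valuation $>v(p)+r$, while the term $B^{p}$ has valuation $\geq pr$; choosing $\mathfrak m$ so that $p$ is unramified (that is, $v(p)=1$) with $p\geq 3$ gives $(p-1)r\geq p-1>1=v(p)$, so both sides again have incompatible valuations. Hence $\Gamma(\mathfrak m)$, and therefore $H$, is torsion-free.

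The main obstacle is precisely this last case $q=p$: killing torsion whose order equals the residue characteristic, which a single congruence cannot do and which succeeds only once $\mathfrak m$ is chosen with good arithmetic (unramified, $p>2$). I would therefore spend the most care on verifying that such a maximal ideal exists in a finitely generated domain of characteristic zero, equivalently that one may specialize $A$ so as to realize $G$ inside $GL(n,\mathcal O)$ for the ring of integers $\mathcal O$ of a number field and then reduce modulo a well-chosen unramified prime. After that, the valuation estimate above closes the torsion-freeness, and since $\Gamma(\mathfrak m)$ is a kernel and hence normal, $H=G\cap\Gamma(\mathfrak m)$ is the desired torsion-free normal subgroup of finite index, completing the proof.
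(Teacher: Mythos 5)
The paper itself offers no proof of this statement: it is Theorem 6.6 of the appendix, quoted as Selberg's Lemma with a citation to \cite{Sel}, so your proposal has to be judged on its own merits against the classical arguments. Your first two steps are correct and are exactly how every standard proof begins: the entries of the finitely many generators and their inverses generate a finitely generated domain $A\subseteq\mathbb C$ of characteristic zero with $G\leq GL(n,A)$; by the Nullstellensatz for finitely generated $\mathbb Z$-algebras every maximal ideal $\mathfrak m\subset A$ has finite residue field of some characteristic $p$, so $H=\ker\bigl(G\to GL(n,A/\mathfrak m)\bigr)$ is normal of finite index. Your treatment of torsion of prime order $q\neq p$ is also correct, and in fact needs less than you invoke: it uses only the Krull intersection theorem and the superadditivity $v(xy)\geq v(x)+v(y)$ of the $\mathfrak m$-adic order function, since $q$ is a unit in $A_{\mathfrak m}$.

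The genuine gap is the case $q=p$, and your proposed repair fails on two counts. First, on a general Noetherian local domain $A_{\mathfrak m}$ the order function is \emph{not} a valuation: one only has $v(xy)\geq v(x)+v(y)$, whereas your contradiction needs the exact equality $v(pB)=v(p)+v(B)=1+r$ on the left-hand side; equality requires the associated graded ring $\mathrm{gr}_{\mathfrak m}(A_{\mathfrak m})$ to be a domain (e.g.\ $A_{\mathfrak m}$ regular), and you neither restrict $\mathfrak m$ to the smooth locus nor show an unramified choice with $v(p)=1$ exists. Second, and more seriously, your claimed equivalent formulation --- specialize $A$ so as to ``realize $G$ inside $GL(n,\mathcal O)$'' for a number ring $\mathcal O$ --- is false in general: if $A$ has positive transcendence degree over $\mathbb Q$, no ring homomorphism $\sigma:A\to\mathcal O$ is injective, so $G$ need not embed; your valuation argument then only shows $\sigma(g)=I$ for a torsion element $g$ of the congruence kernel, while $g$ itself may be a nontrivial torsion element of $\ker(\sigma|_G)$, which sits inside $H$ and destroys torsion-freeness. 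Circumventing exactly this is the content of Selberg's Lemma (Selberg and Cassels do it via an injective embedding of the whole fraction field into a local field, a much deeper fact). There are two standard elementary repairs, each using only the $q\neq p$ computation you already have: (i) a torsion element of prime order $q$ has a primitive $q$-th root of unity among its eigenvalues, hence of degree $\leq n$ over the fraction field $F$; since the algebraic numbers in the finitely generated field $F$ form a number field, only finitely many primes $q$ can occur, so choose $\mathfrak m$ whose residue characteristic avoids them; or (ii) choose two maximal ideals $\mathfrak m_1,\mathfrak m_2$ with distinct residue characteristics $p_1\neq p_2$ (possible because $A[1/p_1]$ is again a nonzero finitely generated $\mathbb Z$-algebra) and let $H$ be the kernel of $G\to GL(n,A/\mathfrak m_1)\times GL(n,A/\mathfrak m_2)$: a prime-order element of $H$ would force $q=p_1$ and $q=p_2$ simultaneously, a contradiction. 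With either repair the proof closes; as written, it is incomplete.
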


\subsection*{Acknowledgements}
The work is supported by NSFC (No. 11771318, No. 11790274). We would like to thank Prof. Binyong Sun for providing us the idea of the proof of Proposition 6.4.

\end{document}